\newcommand{\K}{\mathbb{K}}
\newcommand{\R}{\mathbb{R}}
\newcommand{\C}{\mathbb{C}}
\newcommand{\N}{\mathbb{N}}
\newcommand{\Z}{\mathbb{Z}}
\newcommand{\ind}{\text{ind}}
\renewcommand{\span}{\text{span}}
\newtheorem{theorem}{Theorem}[section]
\newtheorem{lemma}[theorem]{Lemma}
\newtheorem{corollary}[theorem]{Corollary}
\newtheorem{proposition}[theorem]{Proposition}
\theoremstyle{definition}
\newtheorem{remark}[theorem]{Remark}
\newtheorem{definition}[theorem]{Definition}
\title{Dynamics of shift operators on non-metrizable sequence spaces}
\author[J.\ Bonet]{Jos\'e Bonet}
\address{Jos\'e Bonet, Institut Universitari de Matem\`atica Pura i Aplicada, Universitat Polit\`ecnica de Val\`encia, 46022 Val\`encia, Spain}
\email{jbonet@mat.upv.es}
\author[T.\ Kalmes]{Thomas Kalmes}
\address{Thomas Kalmes, Chemnitz University of Technology, Faculty of Mathematics, 09107 Chemnitz, Germany}
\email{thomas.kalmes@math.tu-chemnitz.de}
\author[A.\ Peris]{Alfred Peris}
\address{Alfred Peris, Institut Universitari de Matem\`atica Pura i Aplicada, Universitat Polit\`ecnica de Val\`encia, 46022 Val\`encia, Spain}
\email{aperis@mat.upv.es}
\thanks{This article is accepted for publication in Revista Matem\'atica Iberoamericana.}
\begin{document}
	
	\begin{abstract}
		We investigate dynamical properties such as topological transitivity, (sequential) hypercyclicity, and chaos for backward shift operators associated to a Schauder basis on LF-spaces. As an application, we characterize these dynamical properties for weighted generalized backward shifts on K\"othe coechelon sequence spaces $k_p((v^{(m)})_{m\in\N})$ in terms of the defining sequence of weights $(v^{(m)})_{m\in\N}$. We further discuss several examples and show that the annihilation operator from quantum mechanics is mixing, sequentially hypercyclic, chaotic, and topologically ergodic on $\mathscr{S}'(\R)$.
	\end{abstract}

	\maketitle	
	
	\section{Introduction}
	
	The study of dynamical properties of linear operators has attracted much interest in recent years. Most articles concentrate on the dynamics of (continuous linear operators) $T\in L(E)$ defined on a separable Fr\'echet space $E$. The advantage of completeness and metrizability lies in the applicability of Baire category arguments, which are very useful in this context. A few articles deal with dynamics of operators on non-metrizable topological vector spaces (see e.g.\ \cite{Bo00}, \cite{BoFrPeWe2005}, \cite{BoDo12}, \cite{DoKa18}, \cite{GEPe10}, \cite{Kalmes19-3}, \cite{MuPe15}, \cite{Peris18}, \cite{Shkarin2012}, and Chapter 12 in \cite{GEPe11}).
	
	Recall that an operator $T\in L(E)$ on a topological vector space $E$ is called \textit{(topologically) transitive} if for any pair of non-empty, open subsets $U,V\subseteq E$ the set
	$$N_T(U,V)=\{n\in\N;\,T^n(U)\cap V\neq\emptyset\}$$
	is not empty, while $T$ is called \textit{(topologically) mixing}, if these sets are cofinite. More generally, for an infinite subset $I\subseteq\N$, a family $(T_n)_{n\in I}\in L(E)^I$ is called \textit{(topologically) transitive} if for every pair of non-empty, open subsets $U,V\subseteq E$ there is $n\in I$ with $T_n(U)\cap V\neq\emptyset$. Obviously, $T$ is (topologically) mixing if and only if, for any infinite subset $I\subseteq \N$ the family $(T^n)_{n\in I}$ is (topologically) transitive.
	
	Moreover, $T$ is called \textit{(sequentially) hypercyclic} if there is $x\in E$ whose orbit $\{x,Tx,T^2x,\ldots\}$ is (sequentially) dense in $E$. Clearly, every hypercyclic operator is transitive. The converse holds in case $E$ is separable, complete, and metrizable, due to Birkhoff's Transitivity Theorem. Furthermore, a transitive operator $T$ on $E$ is called \textit{chaotic} if the set of periodic points of $T$ is dense in $E$. Finally, $T$ is called \textit{topologically ergodic} if for each pair of non-empty and open subsets $U,V$ of $E$ the set $N_T(U,V)$ is syndetic, i.e.\ there is $p\in\N$ such that $\{n,\ldots,n+p\}$ intersects $N_T(U,V)$ for every $n\in\N$.
	
	The purpose of this article is to characterize dynamical properties for weighted generalized backward shifts on K\"othe coechelon spaces. K\"othe echelon and coechelon spaces play a very relevant role in the theory of Fr\'echet spaces and their applications, for example in connection with the isomorphic classification and the existence of Schauder basis. Moreover, many spaces of analytic or smooth functions are isomorphic to echelon or coechelon spaces.
	We refer the reader to \cite{Bi1988}, \cite{BiBo03}, \cite{BiMeSu1982}, \cite{Vald1982},  \cite{Vogt1983}  and the references therein. Weighted (generalized) backward shifts are natural operators on sequence spaces, and thus, many authors have investigated the above properties of these operators on various sequence spaces (see e.g.\ \cite{Salas1995}, \cite{Grosse-Erdmann2000}, \cite{MaPe2002} and \cite{BeMePePu19}). The paper is organized as follows. In section \ref{LF-sequence spaces} we consider LF-spaces with a special Schauder basis and we study the above dynamical properties for the backward shift associated to these Schauder bases. In section \ref{Koethe coechelon spaces}, on the one hand, we evaluate our results for the special case of K\"othe coechelon spaces $k_p(V)$ and on the other hand we extend them to characterize the above dynamical properties for weighted generalized backward shifts in terms of the defining weight sequence $V=(v^{(m)})_{m\in\N}$. In the final section \ref{examples} we present some examples to illustrate our results and we conclude with some natural open problems. In particular, we consider the special case of dual spaces of power series spaces of infinite type, and as a concrete application we show that the annihilation operator from quantum mechanics is mixing, hypercyclic, chaotic, and topologically ergodic on $\mathscr{S}'(\R)$.
	
	For anything related to functional analysis which is not explained in the text, we refer the reader to \cite{MeVo1997}, and for notions and results about dynamics of linear operators we refer to \cite{BaMa09} and \cite{GEPe11}.
	
	\section{The backward shift on certain LF-spaces}\label{LF-sequence spaces}

	The basic model of linear dynamics in a sequence space is the (unilateral) backward shift
	$$
	B(x_1,x_2,x_3,\dots )=(x_2,x_3,x_4,\dots ).
	$$
	As mentioned in the introduction, several dynamical properties of (weighted and unweighted) backward shifts have been studied on Fr\'echet sequence spaces. It turns out that in certain natural cases one cannot iterate the operator in the space, since each iterate has the range in a bigger space. This is the case, for instance, for the dynamics of the differentiation operator on certain weighted spaces of holomorphic functions (which, at the end, can be represented as the backward shift on a suitable sequence space) studied in \cite{Bo00}, or the ``snake shifts'' introduced in \cite{BoFrPeWe2005}. This is the main motivation to study the dynamics of shift operators on countable inductive limits of Fr\'echet spaces (in short, LF-spaces).
	
	An inductive spectrum of Fr\'echet spaces $(E_m)_{m\in\N}$ is an increasing sequence of Fr\'echet spaces such that the inclusion $E_m \subset E_{m+1}$ is continuous for each $m \in \N$. The inductive limit $E=\ind_m E_m$ of the spectrum is the union of the sequence $(E_m)_{m\in\N}$ and it is endowed with the finest locally convex topology such that the inclusion $E_m \subset E$ is continuous for each $m \in \N$. We assume that the topology of the inductive limit is Hausdorff. This is always the case for K\"othe coechelon spaces. The space $\mathcal{D}(\Omega)$ of test functions for Schwartz distributions is one of the most important examples of an (LF)-spaces. We refer the reader to \cite{Bi1988}, \cite{Vogt1992} and \cite{Weng03} for more information about (LF)-spaces.
	
	In this section we characterize dynamical properties of the backward shift operator on certain LF-spaces.
	
	\begin{definition}
		Let $(E_m)_{m\in\N}$ be an inductive spectrum of Fr\'echet spaces with inductive limit $E=\ind_m E_m$. A sequence $(e_j)_{j\in\N}$ in $E_1$ is called a \textit{stepwise Schauder basis} if $(e_j)_{j\in\N}$ is a Schauder basis for each $E_m, m\in\N$. If the linear mapping on $\text{span}\{e_j;\,j\in\N\}$ defined by $B e_1:=0$ and $B e_j:=e_{j-1}, j\geq 2,$ extends to a continuous linear self-map $B$ on $E$, $B$ is called the \textit{backward shift associated with $(e_j)_{j\in\N}$}.
	\end{definition}
	
	\begin{remark}\label{Grothendieck}
		\begin{enumerate}
			\item[i)] For an LF-space $E=\ind_m E_m$ with stepwise Schauder basis $(e_j)_{j\in\N}$ and associated backward shift $B$ it is an immediate consequence of Grothendieck's Factorization Theorem \cite[Theorem 24.33]{MeVo1997} that for every $m\in\N$ there is $n\in\N$ such that $B(E_m)\subseteq E_n$ and that $B:E_m\rightarrow E_n$ is continuous. By dropping some of the step spaces if necessary we thus may assume without loss of generality that $B:E_m\rightarrow E_{m+1}$, $m\in\N$.
			\item[ii)] The typical example of an LF-space with stepwise Schauder basis $(e_j)_{j\in\N}$ and associated backward shift we have in mind is an LF-sequence space $E=\ind_m E_m$, i.e.\ an LF-subspace $E$ of $\omega=\K^\N$ for which the canonical basis sequence $(e_j)_{j\in\N}$ with $e_j=(\delta_{j,l})_{l\in\N}$ is a Schauder basis in each step space of $E$. If $E$ is invariant under the continuous linear mapping
			$$\omega\rightarrow\omega, (x_j)_{j\in\N}\mapsto (x_{j+1})_{j\in\N}$$
			it follows that its restriction to $E$ has a closed graph, and thus, is a continuous linear self-map of $E$ by de Wilde's Closed Graph Theorem \cite[Theorem 24.31]{MeVo1997}.
		\end{enumerate}
	\end{remark}
	
	We begin with a result which will be used several times within this section.
	
	\begin{proposition}\label{every LF-sequence space is nice}
		Let $E=\ind_m E_m$ be an LF-space with stepwise Schauder basis $(e_j)_{j\in\N}$. Then, for every $m\in\N$, on the Fr\'echet space $E_m$ there is an increasing fundamental sequence of seminorms $(p_k)_{k\in\N}$ satisfying
		$$\forall\,k\in\N, x=\sum_{j=1}^\infty x_j e_j\in E_m, s\in\N:\,p_k(\sum_{j=1}^s x_j e_j)\leq p_k(x).$$
	\end{proposition}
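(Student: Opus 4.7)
The plan is to exploit the fact that the partial sum projections $P_s\colon x = \sum_{j=1}^{\infty}x_j e_j\mapsto \sum_{j=1}^{s}x_j e_j$ are continuous linear operators on the Fréchet space $E_m$ (by definition of a Schauder basis), converge pointwise to the identity, and hence by the Banach--Steinhaus theorem form an equicontinuous family. From this one can enlarge an arbitrary fundamental system of seminorms so that each new seminorm dominates $q_k \circ P_s$ uniformly in $s$.

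More concretely, I would first fix any increasing fundamental sequence $(q_k)_{k\in\N}$ of continuous seminorms on $E_m$. By equicontinuity of $(P_s)_{s\in\N}$, for each $k$ there exists a continuous seminorm $\tilde q_k$ on $E_m$ such that $q_k(P_s x) \leq \tilde q_k(x)$ for all $s\in\N$ and all $x\in E_m$. I would then define
\[
  p_k(x) := \sup_{s\in\N} q_k(P_s x),\qquad x\in E_m.
\]
The bound by $\tilde q_k(x)$ shows that $p_k$ is a well-defined continuous seminorm on $E_m$. Since $P_s x \to x$ in $E_m$ and $q_k$ is continuous, one has $q_k(x)\leq p_k(x)$, so $(p_k)_{k\in\N}$ is again a fundamental system of seminorms on $E_m$. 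Replacing $p_k$ by $\max\{p_1,\dots,p_k\}$ if necessary (this operation preserves the stated inequality), I may take the sequence to be increasing.

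To verify the key property, I would use the idempotent-type identity $P_r\circ P_s = P_{\min(r,s)}$, valid on $E_m$ because $(e_j)_{j\in\N}$ is a Schauder basis. Then
\[
  p_k\bigl(P_s x\bigr) = \sup_{r\in\N} q_k\bigl(P_r P_s x\bigr) = \sup_{r\in\N} q_k\bigl(P_{\min(r,s)} x\bigr) \leq \sup_{r\in\N} q_k\bigl(P_r x\bigr) = p_k(x),
\]
which is exactly the desired inequality.

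The only non-trivial ingredient is the equicontinuity of the family $(P_s)_{s\in\N}$ on the Fréchet space $E_m$; everything else is a direct calculation. This is the classical fact that the coordinate projections of a Schauder basis in a Fréchet space are uniformly equicontinuous, and it is an immediate consequence of the Banach--Steinhaus theorem applied to the pointwise-convergent sequence $(P_s)_{s\in\N}$ of continuous operators. Once this is in hand, the construction of the $p_k$ by taking suprema of $q_k\circ P_s$ is routine, and I anticipate no further obstacles.
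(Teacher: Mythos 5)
Your proposal is correct and follows essentially the same route as the paper: both arguments rest on the equicontinuity of the partial-sum projections (a consequence of Banach--Steinhaus on the Fr\'echet space $E_m$) and then build the new seminorms as $\sup_{s}q_k\circ P_s$ (the paper additionally takes the maximum with $q_k$ itself, which your observation $q_k(x)\leq\sup_s q_k(P_sx)$ renders unnecessary). The verification via $P_r\circ P_s=P_{\min(r,s)}$ is exactly the computation the paper leaves implicit.
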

	
	\begin{proof}
		Fix $m\in\N$ and let $(q_k)_{k\in\N}$ be an increasing fundamental system of seminorms for $E_m$. For $s\in\N$ we define
		$$\pi_s:E_m\rightarrow E_m, x=\sum_{j=1}^\infty x_j e_j\mapsto \sum_{j=1}^s x_j e_j.$$
		Then $\{\pi_s;\,s\in\N\}$ is equicontinuous since $(e_j)_{j\in\N}$ is a Schauder basis for $E_m$. In particular, for $x\in E_m$ the set $\{\pi_s(x);\,s\in\N\}$ is a bounded subset of $E_m$ and via
		$$p_k:E_m\rightarrow [0,\infty), x\mapsto \max\{q_k(x), \sup_{s\in\N}q_k(\pi_s(x))\}, k\in\N,$$
		we obtain an increasing fundamental sequence of seminorms $(p_k)_{k\in\N}$ for $E_m$ satisfying the desired property.
	\end{proof}
	
	\begin{proposition}\label{general transitivity}
		Let $E$ be an LF-space with stepwise Schauder basis $(e_j)_{j\in\N}$ and associated backward shift $B$. Then, for an infinite subset $I\subseteq\N$, the following are equivalent.
		\begin{enumerate}
			\item[i)] $(B^n)_{n\in I}$ is transitive on $E$.
			\item[ii)] For each $s\in\N_0$ there are $m\in\N$ and a strictly increasing sequence $(j_k)_{k\in\N}\in I^\N$ such that $\lim_{k\rightarrow\infty}e_{j_k+s}=0$ in $E_m$.
		\end{enumerate}
	\end{proposition}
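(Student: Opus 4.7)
For (ii) $\Rightarrow$ (i), I start with non-empty open $U, V \subseteq E$ and use density of the finite linear span of $(e_j)_{j\in\N}$ in each Fr\'echet step space (and hence in $E$) to choose $u \in U$ and $v \in V$ of the form $u = \sum_{l=1}^r a_l e_l$, $v = \sum_{l=1}^r b_l e_l$ for a common $r \in \N$. Applying (ii) with $s = r$ yields $m \in \N$ and a strictly increasing $(j_k) \in I^\N$ with $e_{j_k + r} \to 0$ in $E_m$. Iterating $B : E_\ell \to E_{\ell + 1}$ (Remark~\ref{Grothendieck}(i)) cascades the convergence: $e_{j_k + l} = B^{r - l} e_{j_k + r} \to 0$ in $E_{m + r - l}$ for $l = 1, \dots, r$, so all converge in $E_{m + r - 1}$ and hence in $E$. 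Setting $x_k := u + \sum_{l=1}^r b_l e_{j_k + l}$ gives $x_k \to u$ in $E$ (so $x_k \in U$ eventually); and for $j_k > r$ we have $B^{j_k} u = 0$ and $B^{j_k} e_{j_k + l} = e_l$, whence $B^{j_k} x_k = v \in V$ and $j_k \in N_T(U, V)$.

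For (i) $\Rightarrow$ (ii) with $s \geq 1$ I argue by contradiction. If (ii) fails, then for every $m \in \N$ there is a continuous seminorm $q_m$ on $E_m$ with $q_m(e_{j + s}) \geq 1$ for all $j \in I$; via Proposition~\ref{every LF-sequence space is nice} and rescaling I may further assume $q_m(\pi_\ell x) \leq q_m(x)$ for all $x \in E_m$, $\ell \in \N$. Setting $U_m := \{x \in E_m : q_m(x) < 1\}$ and $U := \Gamma\bigl(\bigcup_m \tfrac{1}{5} U_m\bigr)$, the Minkowski functional $p$ of $U$ satisfies $p \leq 5 q_m$ on each $E_m$, is therefore continuous on $E$, and $U' := \{p < 1\} \subseteq U$ is an open $0$-neighborhood. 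For $x \in U$ expressed as $\sum_i c_i y_i$ with $\sum_i |c_i| \leq 1$ and $y_i \in \tfrac{1}{5} U_{m_i}$, the partial sum property yields $|(y_i)_{n + s}| \cdot q_{m_i}(e_{n + s}) \leq 2 q_{m_i}(y_i) < 2/5$, and $q_{m_i}(e_{n + s}) \geq 1$ then forces $|x_{n + s}| < 2/5$. Taking $V := \{y \in E : |y_s - 1| < 1/2\}$, which is open because the coordinate functional $y \mapsto y_s$ is continuous on each $E_m$, any $B^n x \in V$ with $x \in U'$ would give $|x_{n + s}| = |(B^n x)_s| > 1/2$, contradicting the bound. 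Hence $B^n(U') \cap V = \emptyset$ for every $n \in I$, contradicting (i); from this I extract a strictly increasing sequence in $I$ by using metrizability of $E_m$ and the fact that the nonzero basis vectors $e_{j + s}$ have pairwise distinct indices.

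The case $s = 0$ reduces to $s = 1$: if that case provides $m_1$ and a strictly increasing $(j_k) \in I^\N$ with $e_{j_k + 1} \to 0$ in $E_{m_1}$, then continuity of $B : E_{m_1} \to E_{m_1 + 1}$ yields $e_{j_k} = B(e_{j_k + 1}) \to 0$ in $E_{m_1 + 1}$. The main obstacle is the (i) $\Rightarrow$ (ii) step: continuous seminorms on a single step space $E_m$ do not in general extend to continuous seminorms on the non-metrizable LF-space $E$, so one cannot directly read off $E_m$-convergence from a single $E$-seminorm. Overcoming this requires the explicit LF-compatible construction of $U$ as an absolutely convex hull of scaled balls from every step space, combined with the partial-sum-preserving seminorms from Proposition~\ref{every LF-sequence space is nice}, which together deliver the uniform coordinate bound that drives the contradiction.
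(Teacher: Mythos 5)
Your proposal is correct and follows essentially the same route as the paper's proof: the same contradiction argument for i)\,$\Rightarrow$\,ii) built on the partial-sum-compatible seminorms of Proposition~\ref{every LF-sequence space is nice} and a canonical absolutely convex zero neighborhood of the LF-space yielding a uniform bound on the $(n+s)$-th coordinate, the same reduction of the case $s=0$ to $s\geq 1$ via continuity of $B$ between step spaces, and the same far-out block perturbation for ii)\,$\Rightarrow$\,i). The only differences are cosmetic (Minkowski functional of $\Gamma(\bigcup_m\tfrac15 U_m)$ versus the paper's $\bigcup_k\sum_{m=1}^k 2^{-m}\{p^{(m)}\leq 1\}$, and cascading $B:E_\ell\to E_{\ell+1}$ in place of $\bigcap_{n=0}^s B^{-n}(W)$).
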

	
	\begin{proof}
		In order to show that i) implies ii) we assume that $(B^n)_{n\in I}$ is transitive on $E$ but ii) is not satisfied, that is, there is $s\in\N_0$ such that for all $m\in\N$ there is an absolutely convex zero neighborhood $U_m$ in $E_m$ such that $U_m\cap\{e_{j+s};\,j\in I\}$ is finite. By shrinking each $U_m$ if necessary we may assume without loss of generality that $U_m$ and $\{e_{j+s};\,j\in I\}$ are disjoint for each $m\in\N$. Moreover, taking into account Proposition \ref{every LF-sequence space is nice}, we additionally may assume without loss of generality that for each $m\in\N$ there is a continuous seminorm $p^{(m)}$ on $E_m$ satisfying
		\begin{equation}\label{nice seminorms}
		\forall\,x=\sum_{j=1}^\infty x_j e_j\in E_m, r\in\N:\,p^{(m)}(\sum_{j=1}^r x_j e_j)\leq p^{(m)}(x)
		\end{equation}
		such that $\{x\in E_m;\,p^{(m)}(x)\leq 1\}$ and $\{e_{j+s};\,j\in I\}$ are disjoint, i.e.\ $p^{(m)}(e_{j+s})>1$ for all $j\in I,m\in\N$. We first assume $s\geq 1$. 
		
		Since for each $E_m$ the projection onto the span of $e_s$ is continuous, the same holds for $E$ (cf.\ \cite[Proposition 24.7]{MeVo1997}) so that $\{x\in E;|x_s|<1/2\}$ is a zero neighborhood in $E$ as is
		$$W:=\bigcup_{k\in\N}\Big(\sum_{m=1}^k\frac{1}{2^m}\{x\in E_m;\,p^{(m)}(x)\leq 1\}\Big)\cap\{x\in E;\,|x_s|<1/2\}$$
		(cf.\ \cite[Proposition 24.6(c)]{MeVo1997}). From the transitivity of $(B^n)_{n\in I}$ we conclude the existence of $x\in W$ and $n\in I$ with $B^nx\in (3e_s+W)$. In particular, there is $x\in W$ and $n\in I$ with $|(B^nx)_s-3|<1/2$ so that $|x_{n+s}|>5/2$.
		
		As $x\in W$ there are $k\in\N$  and $y^{(m)}\in\{y\in E_m;\,p^{(m)}(y)\leq 1\}, 1\leq m\leq k,$ such that $x=\sum_{m=1}^k\frac{1}{2^m}y^{(m)}$. Thus, applying the projection onto the $(n+s)$ coordinate with respect to the Schauder basis $(e_j)_{j\in\N}$ we get from
		\begin{eqnarray*}
			5/2&<&|x_{n+s}|\leq\sum_{m=1}^k\frac{1}{2^m}|y_{n+s}^{(m)}|<\sum_{m=1}^k\frac{1}{2^m}p^{(m)}(y_{n+s}^{(m)}e_{n+s})\\
			&=&\sum_{m=1}^k\frac{1}{2^m}\Big(p^{(m)}\big(\sum_{j=1}^{n+s}y_j^{(m)}e_j-\sum_{j=1}^{n+s-1} y_j^{(m)}e_j\big)\Big)\\
			&\leq&\sum_{m=1}^k \frac{1}{2^{m-1}}p^{(m)}(y^{(m)})<2,
		\end{eqnarray*}
		the desired contradiction. In particular, there are $m\in\N$ and a strictly increasing sequence $(j_k)_{k\in\N}\in I^\N$ such that $\lim_{k\rightarrow\infty}e_{j_k+1}=0$ in $E_m$. By Remark~\ref{Grothendieck} we find $n\geq m$ such that $B:E_m\to E_n$ and it is continuous. Thus $\lim_{k\rightarrow\infty}e_{j_k}=0$ in $E_n$, and we have shown ii) for the case $s=0$. 
		
		It remains to show that ii) implies transitivity of $(B^n)_{n\in I}$ on $E$. In order to do so, we will show that for every $x,y\in\span\{e_j;\,j\in\N\}$ and each absolutely convex zero neighborhood $W$ in $E$ there are $n\in I$ and $w\in W$ with $B^n(x+w)\in (y+W)$. Since $\span\{e_j;\,j\in\N\}$ is sequentially dense in $E$, transitivity of $(B^n)_{n\in I}$ will follow therefrom.
		
		So, we fix $x,y\in\span\{e_j;\,j\in\N\}$ and an absolutely convex zero neighborhood $W$ in $E$. Let $s\in\N$ be such that $x=\sum_{j=1}^s x_j e_j$ and $y=\sum_{j=1}^s y_j e_j$. Then
		$$\tilde{W}:=\cap_{n=0}^s B^{-n}(W)$$
		is an absolutely convex zero neighborhood in $E$.
		
		Let $(j_k)_{k\in\N}\in I^N$ and $m\in\N$ be as in ii) for $s$. Since $\lim_{k\rightarrow\infty}e_{j_k+s}=0$ in $E_m$, in particular, there is $j_k>s$ (which we fix for the rest of the proof) with
		$$e_{j_k+s}\in\frac{1}{1+\sum_{l=1}^s|y_l|}\tilde{W}$$
		implying
		$$\forall\,0\leq n\leq s:\,B^n(e_{j_k+s})\in\frac{1}{1+\sum_{l=1}^s|y_l|}W.$$
		We define
		$$w:=\sum_{l=1}^s y_l e_{j_k+l}=\sum_{l=1}^s y_l B^{s-l}e_{j_k+s}\in\sum_{l=1}^s \frac{y_l}{1+\sum_{j=1}^s|y_j|}W\subseteq W,$$
		since $W$ is absolutely convex. Moreover, since $j_k>s$
		$$B^{j_k}(x+w)=B^{j_k}w=\sum_{l=1}^s y_l B^{j_k}e_{j_k+l}=\sum_{l=1}^s y_l e_l=y$$
		which proves the claim.
	\end{proof}
	
	The above result enables to characterize transitivity and mixing of backward shifts.
	
	\begin{corollary}\label{transitivity on LF}
		Let $E$ be an LF-space with stepwise Schauder basis $(e_j)_{j\in\N}$ and associated backward shift $B$.
		\begin{itemize}
			\item[a)] The following are equivalent.
			\begin{enumerate}
				\item[i)] $B$ is transitive on $E$.
				\item[ii)] There are $m\in\N$ and a strictly increasing sequence $(j_k)_{k\in\N}\in\N^\N$ such that $\lim_{k\rightarrow\infty}e_{j_k}=0$ in $E_m$.
			\end{enumerate}
			\item[b)] The following are equivalent.
			\begin{enumerate}
				\item[i)] $B$ is topologically mixing on $E$.
				\item[ii)] For every infinite subset $I\subseteq\N$ there are $m\in\N$ and a strictly increasing sequence $(j_k)_{k\in\N}\in I^\N$ such that $\lim_{k\rightarrow\infty}e_{j_k}=0$ in $E_m$.
			\end{enumerate}
		\end{itemize}
	\end{corollary}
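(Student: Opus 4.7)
The plan is to derive both parts as essentially immediate consequences of Proposition \ref{general transitivity}. For part a), transitivity of $B$ is by definition transitivity of the family $(B^n)_{n\in\N}$, so Proposition \ref{general transitivity} applies with $I=\N$. For part b), as noted in the introduction, $B$ is topologically mixing if and only if $(B^n)_{n\in I}$ is transitive for every infinite $I\subseteq\N$, so the proposition applies once for each such $I$. In either case condition ii) of the corollary is precisely the $s=0$ instance of condition ii) of Proposition \ref{general transitivity}, so the forward direction i)$\Rightarrow$ii) is immediate by specialising $s=0$. What remains is to show that the $s=0$ instance alone already forces the condition for every $s\in\N_0$.

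For part a), a direct index shift suffices: given $m\in\N$ and a strictly increasing $(j_k)_{k\in\N}\in\N^\N$ with $e_{j_k}\to 0$ in $E_m$, fix $s\in\N_0$ and choose $k_0$ with $j_{k_0}>s$; then $(j_k-s)_{k\geq k_0}$ is a strictly increasing sequence in $\N$ satisfying $e_{(j_k-s)+s}=e_{j_k}\to 0$ in $E_m$, which is exactly condition ii) of Proposition \ref{general transitivity} for that $s$ (with $I=\N$).

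For part b), the subtlety is that the new index sequence must remain inside the prescribed infinite set $I$. This is handled by a dual shift: for fixed $s\in\N_0$ and infinite $I\subseteq\N$, the set $I+s:=\{i+s:\,i\in I\}$ is again infinite, and applying the $s=0$ hypothesis to $I+s$ yields $m\in\N$ and a strictly increasing $(l_k)\in(I+s)^\N$ with $e_{l_k}\to 0$ in $E_m$. Setting $j_k:=l_k-s\in I$ produces a strictly increasing sequence in $I$ with $e_{j_k+s}=e_{l_k}\to 0$ in $E_m$, verifying the hypothesis of Proposition \ref{general transitivity} for $I$ and $s$. Letting $I$ range over all infinite subsets then yields mixing of $B$. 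The only mild obstacle is recognising this shift trick in part b); otherwise the corollary is essentially a repackaging of Proposition \ref{general transitivity}.
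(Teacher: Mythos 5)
Your proof is correct and follows essentially the same route as the paper: both parts are deduced from Proposition~\ref{general transitivity}, with a) obtained by taking $I=\N$ and b) from the characterization of mixing as transitivity of $(B^n)_{n\in I}$ for every infinite $I\subseteq\N$. The paper simply declares the reduction from general $s\in\N_0$ to $s=0$ ``obvious''; your index-shift arguments (replacing $j_k$ by $j_k-s$ in a), and in b) applying the hypothesis to the shifted set $I+s$ so that the resulting sequence lands back in $I$) supply exactly the detail that makes it so.
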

	
	\begin{proof}
		Clearly, a) follows immediately from Proposition \ref{general transitivity} applied to $I=\N$. In order to show b), observe that $B$ is mixing if and only if, for every infinite subset $I\subseteq\N$ the family $(B^n)_{n\in I}$ is transitive. By Proposition \ref{general transitivity}, the latter is equivalent to the fact that for every infinite subset $I\subseteq\N$ and each $s\in \N_0$ there are $m\in\N$ and $(j_k)_{k\in\N}\in I^\N$ for which $(e_{j_k+s})_{k\in\N}$ converge to $0$ in $E_m$ which is obviously equivalent to condition ii).
	\end{proof}

	Before we come to a characterization of (sequential) hypercyclicity for backward shifts, we recall that a subset $I\subseteq\N$ is \textit{thick} if
	$$\forall\,p\in\N\,\exists\,j\in\N:\,\{j,j+1,\ldots,j+p\}\subseteq I.$$
	The following criterion for sequential hypercyclicity \cite[Corollary 3]{Peris18} will be crucial for our next result. We include it here for the reader's convenience.
	
	\begin{lemma}\label{Alfred's criterion}
		Let $E$ be a sequentially separable topological vector space and $T\in L(E)$ such that there is a sequentially dense set $E_0:=\{x_n:\,n\in\N\}\subset E$, a sequence of maps $S_n:E_0\rightarrow E, n\in\N$, a subspace $Y\subset E$ with a finer topology $\tau$ such that $(Y,\tau)$ is an F-space, and an increasing sequence $(n_k)_{k\in\N}$ of natural numbers ($n_0:=0$) satisfying:
		\begin{enumerate}
			\item[i)] For all $j\in\N$ and each $x\in E_0$ there is $l\in\N$ such that $(T^{n_k}S_{n_j}x)_{k\geq l}\subset Y$ and converges to 0 in $(Y,\tau)$,
			\item[ii)] for all $j\geq 0$ and each $x\in E_0$ there is $l\in\N$ such that $(T^{n_j}S_{n_k}x)_{k\geq l}\subset Y$ and converges to 0 in $(Y,\tau)$,
			\item[iii)] for each $x\in E_0$ there is $l\in\N$ such that $(x-T^{n_k}S_{n_k}x)_{k\geq l}\subset Y$ and converges to 0 in $(Y,\tau)$.
		\end{enumerate}
		Then $T$ is sequentially hypercyclic.
	\end{lemma}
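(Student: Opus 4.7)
The plan is to follow the standard blueprint of the Hypercyclicity Criterion (in the Kitai / Gethner--Shapiro / B\`es--Peris tradition), adapted to the non-metrizable setting by letting the auxiliary F-space $(Y,\tau)$ do all the quantitative work while the ambient space $E$ only provides linearity, continuity, and sequential density of $E_0$. Concretely, I would construct a single vector $z\in E$ whose orbit under $T$ admits, for every target $x\in E_0$, a subsequence converging to $x$ in $E$; together with sequential density of $E_0$ this yields sequential hypercyclicity.

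First, fix a translation-invariant metric $d$ generating $\tau$ and reindex $E_0$ as a \emph{target list} $(y_j)_{j\in\N}\subseteq E_0$ in which every $x_n$ appears infinitely often (a standard enumeration of $\N\times\N$). Next, pick the indices $k_1<k_2<\dots$ inductively by a triangular/diagonal recursion designed so that the three families of error terms
\begin{equation*}
T^{n_{k_j}}S_{n_{k_j}}y_j-y_j,\qquad T^{n_{k_j}}S_{n_{k_i}}y_i\;\;(i<j),\qquad T^{n_{k_j}}S_{n_{k_i}}y_i\;\;(i>j),
\end{equation*}
eventually lie in $Y$ with $d$-norms summing to a prescribed small number. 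At stage $j$, hypothesis (iii) lets me shrink the diagonal error to, say, $2^{-j}$ by enlarging $k_j$; hypothesis (i) lets me simultaneously shrink the finitely many lower cross-terms (the $k_i$ with $i<j$ are already fixed); and hypothesis (ii) is invoked at \emph{later} stages, guaranteeing that the upper cross-terms can be made small by choosing each future $k_i$ large enough relative to the already-selected $k_j$.

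Now set $z:=\sum_{j=1}^{\infty}S_{n_{k_j}}y_j$. For each fixed $j$, continuity of $T^{n_{k_j}}$ gives
\begin{equation*}
T^{n_{k_j}}z-y_j=\bigl(T^{n_{k_j}}S_{n_{k_j}}y_j-y_j\bigr)+\sum_{i\neq j}T^{n_{k_j}}S_{n_{k_i}}y_i,
\end{equation*}
and the right-hand side lies in $Y$ with $d$-norm bounded by a geometric tail $\le C\,2^{-j}$. Since $\tau$ is finer than the topology of $E$, it follows that $T^{n_{k_j}}z\to y_j$ in $E$. Because each $x\in E_0$ equals $y_j$ for infinitely many $j$, the orbit of $z$ has, in $E$, a subsequence converging to $x$; sequential density of $E_0$ then upgrades this to sequential density of $\{T^n z:n\in\N\}$ in $E$ via a further diagonal extraction.

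The main obstacle is honest verification that $z$ is a well-defined element of $E$ and that the termwise application of $T^{n_{k_j}}$ is legitimate, since the $S_n$ are only \emph{maps} $E_0\to E$ (not necessarily into $Y$), so one cannot a priori view the partial sums of $z$ as a Cauchy sequence in $(Y,\tau)$. The standard way around this, carried out in the diagonal construction, is to arrange additionally that the partial sums $z_N=\sum_{j=1}^N S_{n_{k_j}}y_j$ differ by elements whose images under every $T^{n_{k_j}}$ (including $n_0=0$, the identity) are Cauchy in $(Y,\tau)$, and then transfer convergence back to $E$ using the continuous inclusion $(Y,\tau)\hookrightarrow E$ together with continuity of the finitely many relevant $T^{n_{k_j}}$. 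Once $z$ is secured, the rest is the algebraic manipulation and geometric-series estimate indicated above.
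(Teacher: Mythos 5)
The paper does not actually prove this lemma: it is quoted verbatim from \cite[Corollary 3]{Peris18}, so there is no internal proof to compare against. Your sketch follows the standard route of that reference (the Hypercyclicity Criterion transplanted to the non-metrizable setting, with all quantitative estimates carried out in the auxiliary F-space $(Y,\tau)$), and the construction of $z$ and the error decomposition are correct. Two remarks. First, the ``obstacle'' you raise about $z=\sum_j S_{n_{k_j}}y_j$ being well defined is resolved directly by hypothesis ii) with $j=0$: since $n_0=0$ and $T^{n_0}=\mathrm{id}$, for each $x\in E_0$ the vectors $S_{n_k}x$ eventually lie in $Y$ and tend to $0$ in $(Y,\tau)$, so you may choose $k_j$ with $d(S_{n_{k_j}}y_j,0)<2^{-j}$, making the partial sums Cauchy in the F-space $Y$; your proposed workaround amounts to exactly this observation, so state it as such. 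Second, the concluding ``further diagonal extraction'' is the one step that genuinely needs an argument: in a non-first-countable space, knowing that every point of the sequentially dense set $E_0$ is a sequential limit of orbit vectors does not by itself make the orbit sequentially dense, since sequential closure is not idempotent. It does work here, but only because your construction approximates each target in the metric $d$ of $(Y,\tau)$: given $y\in E$, choose $x_{p_r}\to y$ in $E$ with $x_{p_r}\in E_0$, then $m_r\nearrow\infty$ with $T^{m_r}z-x_{p_r}\in Y$ and $d(T^{m_r}z-x_{p_r},0)<1/r$; for a $0$-neighborhood $U\supseteq V+V$ of $E$, the continuity of the inclusion $(Y,\tau)\hookrightarrow E$ places a small $d$-ball inside $V$, whence $T^{m_r}z-y\in U$ for large $r$. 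As written, your last sentence is an assertion rather than an argument, and this is precisely the point a referee would press on.
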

	
	\begin{proposition}\label{hypercyclicity on LF}
		Let $E$ be an LF-space with stepwise Schauder basis $(e_j)_{j\in\N}$ and associated backward shift $B$. Then, the following are equivalent.
		\begin{enumerate}
			\item[i)] $B$ is sequentially hypercyclic on $E$.
			\item[ii)] $B$ is hypercyclic on $E$.
			\item[iii)] There are $m\in\N$ and a thick set $I\subseteq\N$ such that $\displaystyle{\lim_{I\ni j\rightarrow\infty}e_j=0}$ in $E_m$.
		\end{enumerate}
	\end{proposition}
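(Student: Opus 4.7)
The plan is to prove (iii) $\Rightarrow$ (i) $\Rightarrow$ (ii) $\Rightarrow$ (iii); the middle implication is immediate, since any sequentially dense orbit is dense.

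For (iii) $\Rightarrow$ (i) I would invoke Lemma \ref{Alfred's criterion} with the countable sequentially dense set $E_0$ consisting of rational (resp.\ $\Q+i\Q$) linear combinations of $\{e_j:j\in\N\}$, maps $S_n:E_0\to E_0$ defined by $S_n e_j:=e_{j+n}$ and extended linearly, and $Y:=E_m$, which is an F-space whose topology is finer than the one inherited from $E$ because the inclusion $E_m\hookrightarrow E$ is continuous. Exploiting thickness of $I$, I would pick an increasing sequence $(n_k)_{k\in\N}$ (with $n_0=0$) such that $[n_k-k,\,n_k+k]\subseteq I$ for every $k\geq 1$. Conditions (i) and (iii) of Lemma \ref{Alfred's criterion} then come essentially for free: $B^{n_k}S_{n_j}x=0$ once $n_k$ exceeds the shifted support of $x$, and $B^{n_k}S_{n_k}x=x$ identically on $E_0$. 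The real check is condition (ii): for $x=\sum_{i=1}^s a_i e_i\in E_0$ one computes $B^{n_j}S_{n_k}x=\sum_{i=1}^s a_i e_{n_k-n_j+i}$, and once $k\geq\max(s,n_j)$ the indices $n_k-n_j+i$ lie inside $[n_k-k,\,n_k+k]\subseteq I$ and tend to infinity, so the hypothesis $e_l\to 0$ in $E_m$ along $I$ forces the sum to $0$ in $E_m$.

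The substantive implication, and the one I expect to be the main obstacle, is (ii) $\Rightarrow$ (iii): one must manufacture a \emph{thick} index set and a single step $m$ out of the bare density of one orbit. Let $y$ be a hypercyclic vector for $B$ and fix $m$ with $y\in E_m$. Since the coordinate functionals are continuous on each $E_m$ and hence on $E$, the sets $V_{p,\varepsilon}:=\{x\in E:|x_k|<\varepsilon,\ k=1,\dots,p\}$ are zero neighbourhoods of $E$, and applying hypercyclicity of $y$ to $u_p+V_{p,1/p}$ with $u_p:=\sum_{k=1}^p e_k$ furnishes a strictly increasing sequence $(n_p)_{p\in\N}$ with $|y_{n_p+k}-1|<1/p$ for $k=1,\dots,p$. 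Setting $I:=\bigcup_p[n_p+1,\,n_p+p]$ yields a thick set by construction; and for any $\varepsilon>0$ the set $\{j\in I:|y_j-1|\geq\varepsilon\}$ is contained in the finite union $\bigcup_{p\leq 1/\varepsilon}[n_p+1,\,n_p+p]$, so $y_j\to 1$ along $I$, and in particular $|y_j|\geq 1/2$ eventually. Since $y=\sum_j y_j e_j$ converges in $E_m$ the individual terms $y_j e_j$ tend to $0$ in $E_m$; writing $e_j=y_j^{-1}(y_j e_j)$ and using homogeneity of each continuous seminorm on $E_m$ then delivers $e_j\to 0$ in $E_m$ along $I$, which is exactly (iii). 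The crux is the observation that a hypercyclic vector must display arbitrarily long plateaus of coefficients near $1$, and that the Schauder-basis convergence $y_j e_j\to 0$ transfers these plateaus from $y$ itself onto the basis vectors $e_j$, upgrading the bare transitivity statement of Corollary \ref{transitivity on LF} to the required thickness characterisation.
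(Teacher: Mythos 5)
Your proof is correct and follows essentially the same route as the paper: the implication ii)$\Rightarrow$iii) via coordinate projections, plateaus of coefficients near $1$, and the Schauder-basis fact $y_je_j\to 0$ in $E_m$ is the paper's argument verbatim, and iii)$\Rightarrow$i) is the paper's application of Lemma \ref{Alfred's criterion} with the same right-shift maps $S_n=S^n$ and $Y=E_m$. The only (welcome) difference is cosmetic: where the paper defines the subsequence recursively by $\tilde{n}_{k+1}=\sum_{r=1}^k\tilde{n}_r+n_{k+1+\sum_{r=1}^k\tilde{n}_r}$ to force $\tilde{n}_k-\tilde{n}_j+l$ into the thick set, you simply take $n_k$ to be the centre of an interval $[n_k-k,n_k+k]\subseteq I$, which handles condition ii) of the criterion with less bookkeeping.
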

	
	\begin{proof}
		Trivially, i) implies ii). In order to show that ii) implies iii) we define for $p\in\N$
		$$\pi_p:E\rightarrow \K^p, x=\sum_{j=1}^\infty x_j e_j\mapsto (x_1,\ldots,x_p)$$
		which is surjective. Let $x\in E$ be a hypercyclic vector for $B$ and let $m\in\N$ be such that $x\in E_m$. Then, for every $r\in\N$ the set $\{\pi_p(B^nx); n\geq r\}$ is dense in $\K^p$. In particular, for every $p,r\in\N$ there is $n_p^r\geq r$ such that
		\begin{equation*}
		1/2>\max_{1\leq l\leq p}|\big(\pi_p(B^{n_p^r}x)\big)_l-1|.
		\end{equation*}
		Hence, for $p,r\in\N$ there is $n_p^r\geq r$ such that $|x_{n_p^r+l}|\geq 1/2$ for all $1\leq l\leq p$ which implies
		\begin{equation}\label{constructing thick set}
		\exists\,(n_p)_{p\in\N}\in\N^\N\text{ strictly increasing }\forall\,1\leq l\leq p:\,|x_{n_p+l}|\geq 1/2.
		\end{equation}
		Obviously, $I:=\{n_p+l;\,p\in\N, 1\leq l\leq p\}$ is a thick subset of $\N$. Since $x\in E_m$ and since $(e_j)_{j\in\N}$ is a Schauder basis of $E_m$ the sequence $(x_j e_j)_{j\in\N}$ converges to $0$ in $E_m$, where as usual $x=\sum_{j=1}^\infty x_j e_j$. Thus, for an arbitrary absolutely convex zero neighborhood $U$ in $E_m$ there is $N\in\N$ such that $x_j e_j\in U$ whenever $j\geq N$. Hence, for all $p\in\N$ with $n_p>N$ and each $1\leq l\leq p$ we conclude by the absolute convexity of $U$ and (\ref{constructing thick set})
		$$e_{n_p+l}=\frac{1}{x_{n_p+l}}x_{n_p+l}e_{n_p+l}\in 2U$$
		which proves $\lim_{I\ni j\rightarrow\infty}e_j=0$ in $E_m$.
		
		It remains to show that iii) implies i). To accomplish this we introduce
		$$S:\span\{e_j;\,j\in\N\}\rightarrow\span\{e_j;j\in\N\},\sum_{j=1}^s x_j e_j\mapsto\sum_{j=1}^s x_j e_{j+1}$$
		as well as $S_n:=S^n$. Note that $E_0:=\span\{e_j;\,j\in\N\}$ is a sequentially dense subspace of $E$ and that for $k,l\in\N$
		$$B^k S_l (\sum_{j=1}^s x_j e_j)=\sum_{j=1}^s x_j\, e_{\max\{0, l-k+j\}},$$
		where $e_0:=0$.
		
		Let $I=\{n_k+l;\,k\in\N, 1\leq l\leq k\}$ with $(n_k)_{k\in\N}$ strictly increasing and $m\in\N$ be such that $\lim_{I\ni j\rightarrow\infty}e_j=0$ in $E_m$. We define recursively $\tilde{n}_1:=n_1$ and for $k\in\N$
		$$\tilde{n}_{k+1}:=\sum_{r=1}^k\tilde{n}_r+n_{k+1+\sum_{r=1}^k\tilde{n}_r}.$$
		Then, for every $k,j\in\N$ with $k>j$ and each $1\leq l\leq k$ we have
		\begin{eqnarray}\label{useful inclusion}
		\tilde{n}_k-\tilde{n}_j+l&=&\sum_{r=1, r\neq j}^{k-1}\tilde{n}_r+n_{k+\sum_{r=1}^{k-1}\tilde{n}_r}+l\\
		&\in&\{n_{k+\sum_{r=1}^{k-1}\tilde{n}_r}+1,\ldots,n_{k+\sum_{r=1}^{k-1}\tilde{n}_r}+k+\sum_{r=1}^{k-1}\tilde{n}_r\}.\nonumber
		\end{eqnarray}
		Next we fix $x=\sum_{j=1}^s x_j e_j\in\span\{e_j;\,j\in\N\}$. Given an absolutely convex zero neighborhood $U$ in $E_m$ there is $K\in\N$ such that $e_{n_k+l}\in U$ whenever $k\geq K$ and $1\leq l\leq k$. From (\ref{useful inclusion}) we obtain in particular that whenever $k$ is such that $k+\sum_{r=1}^{k-1}\tilde{n}_r\geq K$
		$$\forall j<k, 1\leq l\leq k: e_{\tilde{n}_k-\tilde{n}_j+l}\in U$$
		so that for all $k$ with $k+\sum_{r=1}^{k-1}\tilde{n}_r\geq K$ it follows
		$$\forall\,j<k:\,B^{\tilde{n}_j}S_{\tilde{n}_k}\Big(\sum_{l=1}^sx_l e_l\Big)=\sum_{l=1}^sx_le_{\tilde{n}_k-\tilde{n}_j+l}\in(1+\sum_{l=1}^s|x_l|)U$$
		by the absolute convexity of $U$. Hence, $(B^{\tilde{n}_j}S_{\tilde{n}_k}x)_{k\in\N}$ converges to $0$ in $E_m$. Since trivially $(B^{\tilde{n}_k}S_{\tilde{n}_j}x)_{k\in\N}$ and $(x-B^{\tilde{n}_k}S_{\tilde{n}_k}x)_{k\in\N}$ both converge to $0$ in $E_m$ it follows form Lemma \ref{Alfred's criterion} that $B$ is sequentially hypercyclic on $E$.
	\end{proof}
	
	Before we come to the next result of this section, we recall some notions for subsets of $\N$. Recall that for a given $m\in\N$ a subset $A\subseteq\N$ is \textit{$m$-syndetic} if $\N\subseteq\{a-k;\,a\in A, k\in\{0,\ldots,m\}\}$. In case that $A$ is $m$-syndetic for some $m$, we simply say that $A$ is \textit{syndetic}. Given $m\in\N$, a set $A\subseteq\N$ is called \textit{piecewise $m$-syndetic} if $A=A_1\cap A_2$ with $A_1\subseteq\N$ thick and $A_2\subseteq\N$ $m$-syndetic. \textit{Piecewise syndetic} sets are those which are piecewise $m$-syndetic for some $m\in\N$. Finally, given $n,m\in\N$ with $n>m$, we say that a finite set $F\subseteq\N$ is \textit{$(n,m)$-syndetic} if $F=J\cap A$, with $J\subseteq\N$ being an interval of length $n$ and $A\subseteq\N$ $m$-syndetic.
	
	\begin{proposition}\label{mixing implies hypercyclic}
		Let $E$ be an LF-space with stepwise Schauder basis $(e_j)_{j\in\N}$ and associated backward shift $B$. If $B$ is mixing then $B$ is sequentially hypercyclic.
	\end{proposition}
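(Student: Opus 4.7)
The goal is to verify condition iii) of Proposition~\ref{hypercyclicity on LF}: produce $m^*\in\N$ and a thick set $I^*\subseteq\N$ with $\lim_{I^*\ni j\to\infty}e_j=0$ in $E_{m^*}$. My strategy is to prove the stronger statement that the whole basis sequence $(e_j)_{j\in\N}$ converges to zero in some single step space $E_{m^*}$, which makes the choice $I^*=\N$ a trivially thick witness.

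First I would show that $e_j\to 0$ in the LF-topology of $E$. Suppose not; then there is an absolutely convex zero neighborhood $V\subseteq E$ such that $J:=\{j\in\N:\,e_j\notin V\}$ is infinite. Mixing, via Corollary~\ref{transitivity on LF}~b) applied to $J$, furnishes $m\in\N$ and a strictly increasing sequence $(j_k)\subseteq J$ with $e_{j_k}\to 0$ in $E_m$; continuity of the inclusion $E_m\hookrightarrow E$ then gives $e_{j_k}\to 0$ in $E$, so $e_{j_k}\in V$ eventually, contradicting $j_k\in J$.

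Next I would upgrade this to convergence in some fixed $E_{m^*}$. Assuming the contrary, for each $m\in\N$ pick a seminorm $p^{(m)}$ from the truncation system of Proposition~\ref{every LF-sequence space is nice} on $E_m$ and $\alpha_m>0$ with $p^{(m)}(e_j)>\alpha_m$ for infinitely many $j$, say on an infinite set $J_m$. With weights $\mu_m=\alpha_m/4^m$, the set
\[
U:=\bigcup_{k\in\N}\sum_{m=1}^k\mu_m\{x\in E_m:\,p^{(m)}(x)\le 1\}
\]
is a zero neighborhood of $E$. Suppose $e_j\in U$, so $e_j=\sum_{m=1}^k\mu_m u_m$ with $p^{(m)}(u_m)\le 1$. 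Reading off the $j$-th basis coefficient of each $u_m$ and applying the truncation estimate exactly as in the proof of Proposition~\ref{general transitivity} (the key step is $p^{(m)}(u_m^{(j)}e_j)\le 2p^{(m)}(u_m)$), one obtains $1\le\sum_{m=1}^k 2\mu_m/p^{(m)}(e_j)$. Arranging (by a diagonal refinement of the $J_m$'s) that there are infinitely many $j$ with $p^{(m)}(e_j)\ge\alpha_m$ for every $m\le $ some growing bound, the right-hand side is bounded by $\sum 2\cdot 4^{-m}<1$, a contradiction. Hence there is $m^*$ with $e_j\to 0$ in $E_{m^*}$, and Proposition~\ref{hypercyclicity on LF}(iii)$\Rightarrow$(i) with $I^*=\N$ concludes sequential hypercyclicity.

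The delicate step is the second one: general LF-spaces are not automatically sequentially retractive, so the upgrade from LF-convergence to step-space convergence is the real content of the proof. The argument rests entirely on the stepwise Schauder basis structure together with the truncation property of the fundamental seminorms supplied by Proposition~\ref{every LF-sequence space is nice}; the combinatorial part — intersecting the bad sets $J_m$ while keeping the intersection infinite — is where careful bookkeeping is needed and is the main obstacle I anticipate.
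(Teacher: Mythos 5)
Your Step 1 is correct and is a genuinely nice observation: mixing, via Corollary \ref{transitivity on LF} b), does force $e_j\to 0$ in the LF-topology of $E$. The fatal gap is in Step 2, and it cannot be repaired, because the intermediate statement you are trying to prove there is false: mixing does \emph{not} imply that the whole sequence $(e_j)_{j\in\N}$ tends to $0$ in some fixed step $E_{m^*}$. A concrete counterexample is furnished by the weights constructed in Proposition \ref{mixing but not topologically ergodic condition}: there $B$ is mixing on $k_p(V)$, yet for every $m$ there are arbitrarily long intervals on which $v^{(m)}_j>\varepsilon$ for some fixed $\varepsilon=\varepsilon(m)>0$; since $\|e_j\|_{p,v^{(m)}}=v^{(m)}_j$, the sequence $(e_j)_j$ converges to $0$ in no step $\ell_p(v^{(m)})$. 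So the thick set in Proposition \ref{hypercyclicity on LF} iii) really must be allowed to be a proper subset of $\N$.

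Concretely, your Step 2 breaks at the "diagonal refinement": to get $1\le\sum_{m=1}^k 2\mu_m/p^{(m)}(e_j)<1$ you need a single large $j$ with $p^{(m)}(e_j)\ge\alpha_m$ \emph{simultaneously} for all $m$ appearing in the representation $e_j=\sum_{m=1}^k\mu_m u_m$, and since $k$ is not under your control you effectively need $j\in\bigcap_{m\in\N}J_m$. But mixing itself forces $\bigcap_{m\in\N}J_m$ to be finite: were it infinite, Corollary \ref{transitivity on LF} b) would produce $m_0$ and a subsequence of $(e_j)_{j\in\bigcap_m J_m}$ converging to $0$ in $E_{m_0}$, contradicting $p^{(m_0)}(e_j)>\alpha_{m_0}$ on $J_{m_0}\supseteq\bigcap_m J_m$. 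The hypothesis you are exploiting thus precludes the set you need. The paper's proof avoids this by aiming lower: it shows that mixing yields a \emph{piecewise syndetic} set $\tilde I$ and an $m$ with $\lim_{\tilde I\ni j\to\infty}e_j=0$ in $E_m$ (arguing by contradiction through a nested sequence of piecewise syndetic sets $I_m$ avoided by zero neighborhoods $U_m\subseteq E_m$, whose diagonal gives an infinite set violating Corollary \ref{transitivity on LF} b)), and then upgrades piecewise syndetic to thick by applying finitely many powers of $B$, using $B:E_m\to E_{m+1}$. That final upgrade is exactly the mechanism that lets a proper subset of $\N$ suffice, and it has no counterpart in your argument.
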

	
	\begin{proof}
		By Remark \ref{Grothendieck} we can assume without loss of generality that for $E=\ind_m E_m$ we have
		\begin{equation}\label{B acts step by step}
		\forall\,m\in\N:\,B:E_m\rightarrow E_{m+1}\text{ continuously}.
		\end{equation}
		By Proposition \ref{hypercyclicity on LF}, we have to show the existence of $m\in\N$ and a thick set $I\subseteq\N$ such that $\lim_{I\ni j\rightarrow\infty}e_j=0$ in $E_m$. Actually, if we find $m,\tilde{m} \in\N$ and a piecewise $\tilde{m}$-syndetic set $\tilde{I}\subseteq\N$ such that $\lim_{\tilde{I}\ni j\rightarrow\infty}e_j=0$ in $E_m$ then by (\ref{B acts step by step}) and the continuity of the inclusions $E_k\hookrightarrow E_{k+1}, k\in\N,$ we immediately conclude the existence of a thick set $I\subseteq\N$ such that $\lim_{I\ni j\rightarrow\infty}e_j=0$ in $E_{m+\tilde{m}}$, so that $B$ is indeed sequentially hypercyclic.
		
		Let us assume that for every $m\in\N$ there is no piecewise syndetic $\tilde{I}\subseteq\N$ with $\lim_{\tilde{I}\ni j\rightarrow\infty}e_j=0$ in $E_m$. We will show that under this assumption $B$ cannot be mixing.
		
		\textit{Claim 1:} For every piecewise syndetic $I\subseteq\N$ and for every $m\in\N$, there are a piecewise syndetic subset $I_m\subseteq I$ as well as a zero neighborhood $U_m$ in $E_m$ such that $U_m\cap\{e_j;\,j\in I_m\}=\emptyset$.
		
		In order to prove claim 1, let $I\subseteq\N$ be piecewise $k$-syndetic for some $k\in\N$ and let $m\in\N$. Moreover, let $(U_n)_{n\in\N}$ be a decreasing zero neighborhood basis in $E_m$. Then, there is $n_0>k$ such that for all $(n_0,k)$-syndetic sets $F$ we have $\{e_j;\,j\in F\}\cap (E_m\backslash U_{n_0})\neq\emptyset$. Indeed, otherwise we had that for each $n>k$ there was a $(n,k)$-syndetic sets $F_n$ with $\{e_j;\,j\in F_n\}\subseteq U_n$. Since $\tilde{I}:=\cup_{n>k}F_n$ is piecewise $k$-syndetic and for every $l\in\N, l>k,$ we have $e_j\in U_l$ whenever $j\in\cup_{n\geq l}F_n$, this would imply $\lim_{\tilde{I}\ni j\rightarrow\infty}e_j=0$ in $E_m$ which contradicts our assumption that there is no piecewise syndetic $\tilde J\subseteq\N$ with $\lim_{\tilde J\ni j\rightarrow\infty}e_j=0$ in $E_m$.
		
		Since $I$ is piecewise $k$-syndetic, for each $n>n_0$ we find a $(n\cdot n_0,k)$-syndetic set $F_n\subseteq I$. We write as a disjoint union
		$$F_n=\bigcup_{i=1}^n F_{n,i},$$
		where each $F_{n,i}$ is a $(n_0,k)$-syndetic set, $i=1,\ldots, n$. Hence, for each $i=1,\ldots,n$ there is $j(n,i)\in F_{n,i}$ such that $e_{j(n,i)}\notin U_{n_0}$. The set $\tilde{F}_n:=\{j(n,i);\,i=1\,\ldots,n\}$ is clearly a $(n\,n_0,2n_0)$-syndetic set. We further define
		$$I_m:=\bigcup_{n=n_0+1}^\infty \tilde{F}_n\subseteq I$$
		which is piecewise $2n_0$-syndetic and
		$$\forall\,j\in I_m:\,e_j\notin U_{n_0}$$
		which proves claim 1.
		
		\textit{Claim 2:} There exist a decreasing sequence $(I_m)_{m\in\N}$ of piecewise syndetic sets $I_m\subseteq\N$ and a sequence $(U_m)_{m\in\N}$ of zero neighborhoods $U_m$ in $E_m$, $m\in\N$, such that
		$$\forall\,m\in\N:\,U_m\cap\{e_j;\,j\in I_m\}=\emptyset.$$
		Indeed, proceeding by induction, we obtain claim 2 immediately from claim 1.
		
		Now, let $(I_m)_{m\in\N}$ and $(U_m)_{m\in\N}$ be as in claim 2. We select an increasing sequence $(j_m)_{m\in\N}\in\prod_{m\in\N}I_m$ and set $I:=\{j_m;\,m\in\N\}$ which is an infinite set such that for every $m\in\N$ the zero neighborhood $U_m$ in $E_m$ is disjoint to $\{e_j;\,j\in I, j\geq m\}$. Hence, by Corollary \ref{transitivity on LF} b), $B$ is not mixing on $E$.
	\end{proof}
	
	The last result of this section gives a sufficient condition under which the backward shift is topologically ergodic. However, this sufficient condition is not necessary, in general, as is shown in Proposition~\ref{mixing but not topologically ergodic condition} below.
	
	\begin{proposition}\label{ergodic on LF}
		Let $E$ be an LF-space with stepwise Schauder basis $(e_j)_{j\in\N}$ and associated backward shift $B$. Assume that there is $m\in\N$ such that for every zero neighborhood $W$ in $E_m$ the set $I_W:=\{j\in\N;\,e_j\in W\}$ is syndetic. Then $B$ is topologically ergodic on $E$.
	\end{proposition}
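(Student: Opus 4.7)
The plan is to refine the ``ii) implies transitivity'' part of the proof of Proposition~\ref{general transitivity}, tracking how many exponents $n$ actually witness $B^n(U)\cap V\neq \emptyset$. Given non-empty open sets $U,V\subseteq E$, I would choose $x,y\in\text{span}\{e_j;\,j\in\N\}$ and an absolutely convex zero neighborhood $W$ in $E$ with $x+W\subseteq U$ and $y+W\subseteq V$, and pick $s\in\N$ so that $x=\sum_{j=1}^s x_j e_j$ and $y=\sum_{j=1}^s y_j e_j$. Define $\tilde W:=\bigcap_{n=0}^s B^{-n}(W)$, which is an absolutely convex zero neighborhood in $E$ by continuity of $B$, and set $W'':=\frac{1}{1+\sum_{l=1}^s|y_l|}\tilde W\cap E_m$. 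Continuity of the inclusion $E_m\hookrightarrow E$ ensures that $W''$ is a zero neighborhood of the Fr\'echet space $E_m$ to which our hypothesis applies.

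The key observation, already implicit in the transitivity proof, is that a \emph{single} smallness condition on $e_{n+s}$ suffices to run the construction. Indeed, if $n>s$ and $e_{n+s}\in W''$, then writing $e_{n+l}=B^{s-l}e_{n+s}$ and using $\tilde W\subseteq B^{-(s-l)}(W)$ together with the absolute convexity of $W$, the vector $w:=\sum_{l=1}^s y_l e_{n+l}$ lies in $W$, and $B^n(x+w)=y\in V$ because $n>s$ annihilates $x$. Hence $n\in N_B(U,V)$, which yields the inclusion
$$N_B(U,V)\supseteq\{n\in\N;\,n>s,\,e_{n+s}\in W''\}=\{j-s;\,j\in I_{W''},\,j>2s\}.$$

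Finally, the hypothesis says that $I_{W''}=\{j\in\N;\,e_j\in W''\}$ is syndetic. Since syndeticity is preserved under integer translation, under removal of finitely many initial terms, and under passing to supersets, $N_B(U,V)$ is syndetic. As $U,V$ were arbitrary, $B$ is topologically ergodic on $E$. No real obstacle appears beyond the observation that the transitivity construction only needs $e_{n+s}$ (rather than $e_{n+1},\dots,e_{n+s}$ simultaneously) to lie in a prescribed zero neighborhood of $E_m$; with that in hand the syndeticity of the set of valid exponents is transferred directly from the hypothesis.
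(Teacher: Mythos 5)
Your argument is correct and is essentially the paper's own proof: you shrink the target neighborhood by the factor $1/(1+\sum_l|y_l|)$, intersect the preimages $B^{-n}(W)$ for $0\le n\le s$, pull the resulting zero neighborhood back to $E_m$, apply the syndeticity hypothesis there, and observe that $e_{n+s}$ small in that neighborhood forces $w=\sum_l y_l B^{s-l}e_{n+s}\in W$ and $B^n(x+w)=y$, so that $N_B(U,V)$ contains a translate of a cofinite piece of $I_{W''}$ and is therefore syndetic. The only cosmetic difference is whether the scaling factor is applied before or after taking $\bigcap_{n=0}^s B^{-n}(\cdot)$, which is immaterial by linearity of $B$.
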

	
	\begin{proof}
		Let $U,V\subseteq E$ be open and non-empty. Then, there are $x,y\in\text{span}\{e_j;j\in\N\}$ and an absolutely convex zero neighborhood $\tilde{W}$ in $E$ such that
		$$x+\tilde{W}\subseteq U\mbox{ and }y+\tilde{W}\in V.$$
		Let $s\in\N$ be such that $x=\sum_{j=1}^s x_j e_j$ as well as $y=\sum_{j=1}^s y_j e_j$ and define
		$$\tilde{W}_1:=\cap_{n=0}^s B^{-n}(\frac{1}{1+\sum_{l=1}^s|y_l|}\tilde{W})$$
		which is a zero neighborhood in $E$. Denoting by $i_m$ the canonical injection of $E_m$ into $E$, $W:=i_m^{-1}(\tilde{W}_1)$ is a zero neighborhood in $E_m$ so that by the hypothesis $I_W:=\{j\in\N;\,e_j\in W\}$ is syndetic. From the definition it follows
		$$\forall\,n=0,\ldots, s, j\in I_W:\,B^ne_j\in\frac{1}{1+\sum_{l=1}^s|y_l|}\tilde{W}.$$
		For $j\in I_W\cap\{s+1,s+2,\ldots\}$ we set
		$$w_j:=\sum_{k=1}^s y_k e_{j-s+k}=\sum_{k=1}^s y_k B^{s-k}e_j\in \tilde{W}.$$
		Then, for $j\in I_W\cap\{2s+1,2s+2,\ldots\}$ it holds
		$$B^{j-s}(x+w_j)=B^{j-s}w_j=\sum_{k=1}^s y_k B^{j-s}e_{j-s+k}=y$$
		so that $B^{j-s}(x+\tilde{W})\cap (y+\tilde{W})\neq\emptyset$ which implies the proposition since with $I_W$ also $\{n;\,n=j-s, j\in I_W, j>2s\}$ is syndetic.
	\end{proof}
	
	\section{The backward shift on K\"othe coechelon spaces}\label{Koethe coechelon spaces}
	
	In this section we evaluate and complement the results from the previous section for the case of K\"othe coechelon spaces. Let $V=(v^{(m)})_{m\in\N}$ be a decreasing sequence of strictly positive weights on $\N$, i.e.\ $v^{(m)}=(v^{(m)}_j)_{j\in\N}\in (0,\infty)^\N, m\in\N,$ such that
	$$\forall m,j\in\N:\,v^{(m)}_j\geq v^{(m+1)}_j.$$
	For $m\in\N$ and $1\leq p<\infty$ we define as usual
	$$\ell_p(v^{(m)}):=\{x=(x_j)_{j\in\N}\in\omega;\,(x_j v_j^{(m)})_{j\in\N}\in\ell_p\}.$$
	Equipped with the norm $\|x\|_{p,v^{(m)}}:=\|(x_j v_j^{(m)})_{j\in\N}\|_{\ell_p}$ this is a Banach space for which $(e_j)_{j\in\N}=((\delta_{j,l})_{l\in\N})_{j\in\N}$ is a Schauder basis. Due to the fact that $(v^{(m)})_{m\in\N}$ is decreasing, $(\ell_p(v^{(m)}))_{m\in\N}$ is an inductive spectrum of Banach spaces whose inductive limit we denote by $k_p(V)$ or $k_p((v^{(m)})_{m\in\N})$. Analogously, for $m\in\N$ we set
	$$c_0(v^{(m)}):=\{x=(x_j)_{j\in\N}\in\omega;\,(x_j v^{(m)}_j)_{j\in\N}\in c_0\}$$
	which is Banach space when equipped with the norm $\|x\|_{0,v^{(m)}}:=\sup_{j\in\N}|x_j|v^{(m)}_j$. Again, $(e_j)_{j\in\N}=((\delta_{j,l})_{l\in\N})_{j\in\N}$ is a Schauder basis of $c_0(v^{(m)})$ and $(c_0(v^{(m)}))_{m\in\N}$ is an inductive spectrum of Banach spaces whose inductive limit we denote by $k_0(V)$ or $k_0((v^{(m)})_{m\in\N})$. In particular, $k_p(V), p\in\{0\}\cup[1,\infty),$ is an LF-space for which the standard basis sequence $(e_j)_{j\in\N}=((\delta_{j,l})_{l\in\N})_{j\in\N}$ is a stepwise Schauder basis. In this section, $(e_j)_{j\in\N}$ always stands for this basis sequence.
	
	For a given decreasing sequence of weights $V=(v^{(m)})_{m\in\N}$ we denote by $\bar{V}$ the associated family of weights of its projective description, i.e.\ for $\bar{v}=(\bar{v}_j)_{j\in\N}\in [0,\infty)^\N$
	$$\bar{v}\in\bar{V}\Leftrightarrow\forall\,m\in\N\,\exists\,\alpha_m>0\,\forall\,j\in\N:\,\bar{v}_j\leq \alpha_m v^{(m)}_j.$$
	Then $k_p(V)=K_p(\bar{V}), p\in\{0\}\cup[1,\infty)$ where
	$$K_p(\bar{V}):=\text{proj}_{\bar{v}\in\bar{V}}\ell_p(\bar{v}), 1\leq p<\infty,$$
	respectively
	$$K_0(\bar{V}):=\text{proj}_{\bar{v}\in\bar{V}}c_0(\bar{v}),$$
	see \cite{BiMeSu1982}.
	
	Before we evaluate the results from the previous section in the particular context of K\"othe coechelon spaces, we characterize, when the backward shift $B$ associated with the standard basis sequence is well-defined (and continuous) on $k_p(V)$.
	
	\begin{proposition}\label{continuity of the backward shift on Koethe coechelon spaces}
		Let $V=(v^{(m)})_{m\in\N}$ be a decreasing sequence of strictly positive weights and $p\in\{0\}\cup[1,\infty)$. Then the following are equivalent.
		\begin{itemize}
			\item[i)] The backward shift $B:k_p(V)\rightarrow k_p(V)$ is well-defined.
			\item[ii)] The backward shift $B:k_p(V)\rightarrow k_p(V)$ is continuous.
			\item[iii)] For every $m\in\N$ there exist $n\in\N, n\geq m,$ and $C>0$ such that
			$$\forall\,j\in\N: v^{(n)}_j\leq C v^{(m)}_{j+1}.$$
		\end{itemize}
	\end{proposition}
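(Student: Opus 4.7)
The plan is to prove the chain of implications (ii) $\Rightarrow$ (i) $\Rightarrow$ (iii) $\Rightarrow$ (ii). The implication (ii) $\Rightarrow$ (i) is trivial since continuity implies well-definedness. The main non-trivial work lies in extracting the pointwise weight estimate from continuity, and, conversely, verifying that the estimate gives step-by-step continuity of $B$.

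For (i) $\Rightarrow$ (ii) my plan is to invoke Remark~\ref{Grothendieck}~ii). The continuous linear shift operator $\omega \to \omega$, $(x_j)_{j\in\N} \mapsto (x_{j+1})_{j\in\N}$, restricts to $B$ on $k_p(V)$, and well-definedness is precisely the invariance hypothesis of that remark. Since $k_p(V)$ is an LF-space (in particular, ultrabornological and webbed), de Wilde's closed graph theorem applies and yields that $B : k_p(V) \to k_p(V)$ is continuous. Then Remark~\ref{Grothendieck}~i) (Grothendieck's factorization) gives, for every $m \in \N$, some $n \in \N$ with $n \geq m$ such that $B : \ell_p(v^{(m)}) \to \ell_p(v^{(n)})$ (resp.\ $c_0(v^{(m)}) \to c_0(v^{(n)})$ for $p=0$) is continuous between Banach spaces.

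For (ii) $\Rightarrow$ (iii), once I have continuity on step spaces, I pick $C > 0$ such that $\|Bx\|_{p,v^{(n)}} \leq C \|x\|_{p,v^{(m)}}$ for all $x \in \ell_p(v^{(m)})$. Testing on the unit basis vectors $x = e_{j+1}$ immediately gives $\|e_{j+1}\|_{p,v^{(m)}} = v^{(m)}_{j+1}$ and $\|B e_{j+1}\|_{p,v^{(n)}} = \|e_j\|_{p,v^{(n)}} = v^{(n)}_j$, yielding $v^{(n)}_j \leq C\, v^{(m)}_{j+1}$ for every $j \in \N$. The argument is identical for $p = 0$ with the sup-norm, so (iii) follows.

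For (iii) $\Rightarrow$ (ii) I argue directly. Given $m$, pick $n \geq m$ and $C > 0$ satisfying $v^{(n)}_j \leq C v^{(m)}_{j+1}$. For $x = (x_j)_{j\in\N} \in \ell_p(v^{(m)})$ with $1 \leq p < \infty$, compute
\begin{equation*}
\|Bx\|_{p,v^{(n)}}^p = \sum_{j=1}^{\infty} |x_{j+1}|^p (v^{(n)}_j)^p \leq C^p \sum_{j=1}^{\infty} |x_{j+1}|^p (v^{(m)}_{j+1})^p \leq C^p \|x\|_{p,v^{(m)}}^p,
\end{equation*}
and likewise for $p = 0$ with $\|Bx\|_{0,v^{(n)}} \leq C \|x\|_{0,v^{(m)}}$. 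Thus $B$ maps $\ell_p(v^{(m)})$ (resp.\ $c_0(v^{(m)})$) continuously into the corresponding step space at level $n$, and composing with the continuous inclusion into $k_p(V)$ shows that each restriction $B|_{E_m}$ is continuous into $k_p(V)$. By the universal property of the inductive limit topology, $B : k_p(V) \to k_p(V)$ is then continuous, which closes the cycle. I expect no real obstacle; the only subtlety is the use of the closed graph theorem for (i) $\Rightarrow$ (ii), and care with the case $p = 0$, where sums are replaced by suprema but the structure of the estimates is the same.
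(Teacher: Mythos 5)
Your proposal is correct and follows essentially the same route as the paper: the trivial implication ii)\,$\Rightarrow$\,i), de Wilde's closed graph theorem (Remark~\ref{Grothendieck}~ii)) for i)\,$\Rightarrow$\,ii), Grothendieck's factorization plus testing on the basis vectors $e_{j+1}$ for ii)\,$\Rightarrow$\,iii), and the direct norm estimate on step spaces for iii)\,$\Rightarrow$\,ii). The paper merely compresses the last implication to ``clearly,'' whereas you write out the computation; there is no substantive difference.
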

	\begin{proof}
		Clearly, iii) implies ii), and i) follows from ii). Moreover, by Remark \ref{Grothendieck} ii), ii) follows from i).
		
		Finally, if ii) holds it follows from Grothendieck's Factorization Theorem \cite[Theorem 24.33]{MeVo1997} that for each $m\in\N$ there is $n\in\N$ such that
		$$B:\ell_p(v^{(m)})\rightarrow\ell_p(v^{(n)}), (x_j)_{j\in\N}\mapsto (x_{j+1})_{j\in\N}$$
		is well-defined and continuous - in case $1\leq p<\infty$, analogously for $p=0$ - so that there is $C>0$ such that
		$$\forall\,j\in\N:\,v^{(n)}_j=\|e_j\|_{p,v^{(n)}}\leq C\|e_{j+1}\|_{p,v^{(m)}}=Cv^{(m)}_{j+1},$$
		i.e.\ iii) is true.
	\end{proof}
	
	It should be noted that continuity of the backward shift $B$ (and being well-defined) on $k_p(V)$ is independent of $p$.
	
	For a decreasing sequence of strictly positive weights $V$ we denote by $A(V)=(a^{(m)})_{m\in\N}$ the K\"othe matrix where $a^{(m)}:=1/v^{(m)}$ (see e.g.\ \cite[Chapter 27]{MeVo1997} for the notion of a K\"othe matrix as well as for the corresponding K\"othe (echelon) sequence spaces $\lambda_2(A(V))$ appearing in the next theorem). Recall that a continuous linear operator between locally convex spaces is called \textit{Montel} if it maps bounded sets to relatively compact sets.
	
	\begin{theorem}\label{dynamics of backward shift on Koethe coechelon spaces}
		Let $V=(v^{(m)})_{m\in\N}$ be a decreasing sequence of strictly positive weights such that the backward shift $B$ is continuous on $k_p(V), p\in\{0\}\cup [1,\infty)$.
		\begin{enumerate}
			\item[a)] The following are equivalent.
			\begin{enumerate}
				\item[i)] $B$ is transitive on $k_p(V)$.
				\item[ii)] There is $m\in\N$ such that $\liminf_{j\rightarrow\infty}v^{(m)}_j=0$.
			\end{enumerate}
			\item[b)] The following are equivalent.
			\begin{enumerate}
				\item[i)] $B$ is sequentially hypercyclic on $k_p(V)$.
				\item[ii)] $B$ is hypercyclic on $k_p(V)$.
				\item[iii)] There are $m\in\N$ and a thick set $I\subseteq\N$ such that $\lim_{I\ni j\rightarrow\infty}v^{(m)}_j=0$.
			\end{enumerate}
			\item[c)] The following are equivalent.
			\begin{enumerate}
				\item[i)] $B$ is topologically mixing on $k_p(V)$.
				\item[ii)] For every infinite set $I\subseteq \N$ there is $m\in\N$ with $\liminf_{I\ni j\rightarrow\infty} v^{(m)}_j=0$.
				\item[iii)] For every $\bar{v}\in\bar{V}$ we have $\lim_{j\rightarrow\infty}\bar{v}_j=0$.
				\item[iv)] The natural map $i:\ell_2\rightarrow k_2(V)$ is (well-defined and) compact.
				\item[v)] The natural map $i:\lambda_2(A(V))\rightarrow\ell_2$ is (well-defined and) Montel.
			\end{enumerate}
			\item[d)] Assume that there is $m\in\N$ such that the set $I_\varepsilon:=\{j\in\N;\,v^{(m)}_j<\varepsilon\}$ is syndetic for every $\varepsilon>0$. Then $B$ is topologically ergodic on $k_p(V)$.
		\end{enumerate}
	\end{theorem}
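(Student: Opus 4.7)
The unifying observation is the identity $\|e_j\|_{E_m}=v_j^{(m)}$ in each step $E_m=\ell_p(v^{(m)})$ (or $c_0(v^{(m)})$). This translates $e_{j_k}\to 0$ in $E_m$ into $v_{j_k}^{(m)}\to 0$, and the basic zero neighborhoods $\{x\in E_m:\|x\|_{E_m}<\varepsilon\}$ contain $e_j$ iff $v_j^{(m)}<\varepsilon$. Under this dictionary, parts (a), (b), (d) are immediate restatements of the abstract LF-space results: part (a) is Corollary \ref{transitivity on LF}(a), since a strictly increasing $(j_k)$ with $v_{j_k}^{(m)}\to 0$ exists exactly when $\liminf_j v_j^{(m)}=0$; part (b) is Proposition \ref{hypercyclicity on LF} read off verbatim; and part (d) is Proposition \ref{ergodic on LF}, because the abstract set $I_W$ appearing there is precisely $I_\varepsilon$ for the corresponding ball.

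For part (c), the equivalence (i)$\Leftrightarrow$(ii) follows from Corollary \ref{transitivity on LF}(b) by the same translation. The heart of (c) is the equivalence (ii)$\Leftrightarrow$(iii). If (iii) fails, some $\bar v\in\bar V$ satisfies $\bar v_j\geq c>0$ on an infinite set $I$; the defining inequalities $\bar v_j\leq\alpha_m v_j^{(m)}$ then force $v_j^{(m)}\geq c/\alpha_m$ on $I$ for every $m$, so $\liminf_{I\ni j}v_j^{(m)}>0$ for all $m$ and (ii) fails. Conversely, if (ii) fails via an infinite $I$, then $\liminf_{I\ni j}v_j^{(m)}>0$ for every $m$; combined with the strict positivity of the remaining (finitely many) $v_j^{(m)}$ for $j\in I$, this gives $\delta_m:=\inf_{j\in I}v_j^{(m)}>0$, so the choice $\bar v:=\chi_I$ satisfies $\bar v_j\leq(1/\delta_m)v_j^{(m)}$ for all $j$ and all $m$. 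Hence $\bar v\in\bar V$ while $\bar v\not\to 0$, contradicting (iii).

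The further equivalences (iii)$\Leftrightarrow$(iv)$\Leftrightarrow$(v) are consequences of the projective description $k_2(V)=K_2(\bar V)$ together with standard facts about weighted $\ell_2$-embeddings: the embedding $i:\ell_2\to k_2(V)$ is compact iff $i:\ell_2\to\ell_2(\bar v)$ is compact for every $\bar v\in\bar V$, which is the classical criterion $\bar v\in c_0$; the topological duality between $k_2(V)$ and the K\"othe echelon space $\lambda_2(A(V))$ with dual weights $a^{(m)}=1/v^{(m)}$ then identifies this condition with the Montel property of $i:\lambda_2(A(V))\to\ell_2$. The one delicate point is the construction in (ii)$\Rightarrow$(iii): one must produce a single $\bar v\in\bar V$ bounded away from zero on an infinite set while simultaneously verifying all upper bounds $\bar v_j\leq\alpha_m v_j^{(m)}$; the choice $\bar v=\chi_I$ works precisely because $\liminf_{I}v_j^{(m)}>0$ supplies exactly the uniform lower bound needed, with the scaling constants $\alpha_m$ free to grow with $m$.
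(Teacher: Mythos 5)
Your proposal is correct and follows essentially the same route as the paper: parts a), b), d) and c)\,i)$\Leftrightarrow$ii) via the norm identity $\|e_j\|_{p,v^{(m)}}=v^{(m)}_j$ and the abstract LF-space results, c)\,ii)$\Leftrightarrow$iii) by exhibiting a weight $\bar v\in\bar V$ bounded below on the offending infinite set, and c)\,iii)$\Leftrightarrow$iv)$\Leftrightarrow$v) via the projective description, compactness of diagonal $\ell_2$-embeddings, and duality with $\lambda_2(A(V))$. The only (harmless) variation is your choice $\bar v=\chi_I$ where the paper takes $\bar v_j=\inf_m v^{(m)}_j/\varepsilon_m$; both verify membership in $\bar V$ by the same uniform lower bound $\inf_{j\in I}v^{(m)}_j>0$.
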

	
	\begin{proof}
		Since for each $m,j\in\N$ we have $\|e_j\|_{p,v^{(m)}}=v^{(m)}_j$, part a) follows immediately from Corollary \ref{transitivity on LF} a), part b) follows from Proposition \ref{hypercyclicity on LF}, and part d) is a direct consequence of Proposition \ref{ergodic on LF}. Moreover, that i) and ii) in part c) are equivalent is an immediate consequence of Corollary \ref{transitivity on LF} b).
		
		Next, we assume that ii) of c) holds but that there is $\bar{v}\in\bar{V}$ which does not converge to $0$. Hence, there are $\varepsilon>0$ and a strictly increasing sequence $(j_k)_{k\in\N}$ in $\N$ such that
		$$\forall\,k\in\N:\,\bar{v}_{j_k}>\varepsilon.$$
		Choose $m\in\N$ according to c) ii) for $I:=\{j_k;\,k\in\N\}$, i.e.\ $\inf_{k\in\N}v^{(m)}_{j_k}=0$. As $\bar{v}\in\bar{V}$ there is $\alpha_m>0$ such that $\bar{v}_j\leq\alpha_m v^{(m)}_j$ for every $j\in\N$. In particular,
		$$\forall\,k\in\N:\,\varepsilon<\bar{v}_{j_k}\leq\alpha_m v^{(m)}_{j_k}$$
		contradicting $\inf_{k\in\N}v^{(m)}_{j_k}=0$. Thus, c) ii) implies c) iii).
		
		In order to prove the converse implication, assume that c) iii) holds but that for some infinite $I\subseteq\N$ for each $m\in\N$ there is $\varepsilon_m>0$ such that $v^{(m)}_j\geq\varepsilon_m$ whenever $j\in I$. Then, via
		$$\bar{v}_j:=\inf_{m\in\N}\frac{v^{(m)}_j}{\varepsilon_m}, j\in\N,$$
		we obtain $\bar{v}\in\bar{V}$ with $\bar{v}_j\geq 1$ for every $j\in I$. Since $I\subseteq\N$ is supposed to be infinite, this contradicts c) iii), so that c) iii) implies c) ii).
		
		So far we have shown that i), ii), and iii) in c) are equivalent. Moreover, c) iii) holds if and only if the inclusion $\ell_2\hookrightarrow\ell_2(\bar{v})$ is well-defined and compact for all $\bar{v}\in\bar{V}$. Hence, c) iii) implies that (by Tychonov's Theorem) the natural map $i:\ell_2\rightarrow k_2(V)=K_2(\bar{V})$ is well-defined and compact. On the other hand, if
		$$i:\ell_2\rightarrow k_2(V)=K_2(\bar{V})=\text{proj}_{\bar{v}\in\bar{V}}\ell_2(\bar{v})$$
		is compact, it follows that $\ell_2\hookrightarrow\ell_2(\bar{v}), \bar{v}\in\bar{V},$ is compact. Thus, c) iii) and c) iv) are equivalent.
		
		Finally, taking into account that $\lambda_2(A(V))$ is the strong dual of $k_2(V)$ (see e.g.\ \cite[Proposition 27.3, Proposition 27.13]{MeVo1997}) it follows from \cite[Corollary 2.4]{DiDo1993} that c) iv) and c) v) are equivalent.
	\end{proof}
	
	Next, we give a characterization of when the backward shift is chaotic on K\"othe coechelon spaces.
	
	\begin{proposition}\label{chaos for Koethe coechelon spaces}
		Let $V=(v^{(m)})_{m\in\N}$ be a decreasing sequence of strictly positive weights such that the backward shift $B$ is continuous on $k_p(V), p\in\{0\}\cup[1,\infty)$. Then, the following are equivalent.
		\begin{enumerate}
			\item[i)] $B$ has a periodic point $x\in k_p(V), x\neq 0$.
			\item[ii)] There is $m\in\N$ such that $v^{(m)}\in\ell_p$, respectively, $v^{(m)}\in c_0$ when $p=0$.
			\item[iii)] $B$ is chaotic, mixing, and sequentially hypercyclic on $k_p(V)$.
			\item[iv)] $B$ is chaotic on $k_p(V)$.
		\end{enumerate}
		In particular, $B$ is sequentially hypercyclic whenever $B$ is chaotic.
	\end{proposition}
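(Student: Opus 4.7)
The plan is to close the cycle $(iii) \Rightarrow (iv) \Rightarrow (i) \Rightarrow (ii) \Rightarrow (iii)$; the implications $(iii) \Rightarrow (iv)$ and $(iv) \Rightarrow (i)$ are immediate, since density of the periodic points in the nontrivial space $k_p(V)$ forces the existence of a nonzero periodic vector, and the concluding ``in particular'' claim will then follow from $(iv) \Rightarrow (iii)$.

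For the key step $(i) \Rightarrow (ii)$, I would start with a nonzero periodic vector $x = \sum_j x_j e_j$ with $B^N x = x$, which translates to $x_{j+N} = x_j$ for every $j \in \N$. Setting $K := \{k \in \{1,\dots,N\} : x_k \neq 0\} \neq \emptyset$, $c := \min_{k \in K}|x_k| > 0$, and $J := \{j \in \N : x_j \neq 0\}$, the set $J$ is a union of arithmetic progressions of common difference $N$. The finite family $\{B^s x : 0 \le s \le N-1\}$ lies in $k_p(V)$, so using that $V$ is decreasing (hence $\ell_p(v^{(n)}) \subseteq \ell_p(v^{(M)})$ for $n \le M$) I would fix a single $M \in \N$ such that every $B^s x$ lies in $\ell_p(v^{(M)})$ (respectively $c_0(v^{(M)})$ when $p = 0$). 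Bounding $\|B^s x\|_{p,v^{(M)}}$ from below using $|x_{j+s}| \ge c$ whenever $j + s \in J$ then shows that $v^{(M)}$ is $p$-summable (respectively tends to $0$) on the translate $J - s$ for each $s \in \{0,\dots,N-1\}$. Fixing any $k_0 \in K$, every $j \ge k_0$ lies in $J - s$ for $s := (k_0 - j) \bmod N \in \{0,\dots,N-1\}$, so $\bigcup_{s=0}^{N-1}(J - s)$ covers $\N$ cofinitely, yielding $v^{(M)} \in \ell_p$ (respectively $v^{(M)} \in c_0$).

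For $(ii) \Rightarrow (iii)$, assume $v^{(m)} \in \ell_p$ (respectively $c_0$). I would first establish density of periodic points: any $x = \sum_{j=1}^s x_j e_j \in \span\{e_j; j \in \N\}$ is approximated, for $N > s$, by its $N$-periodic extension $y^{(N)}$ defined by $y^{(N)}_j = x_j$ on $\{1,\dots,s\}$, $y^{(N)}_j = 0$ on $\{s+1,\dots,N\}$, and extended to all of $\N$ by $N$-periodicity. Then $B^N y^{(N)} = y^{(N)}$, while $y^{(N)} - x$ is supported in $\{j > N\}$ with entries bounded in modulus by $\max_k|x_k|$, so $\|y^{(N)} - x\|_{p,v^{(m)}} \to 0$ as $N \to \infty$ by the assumed $p$-summability of $v^{(m)}$ (respectively by $v^{(m)} \in c_0$); combined with sequential density of $\span\{e_j; j\in\N\}$ in $k_p(V)$, this gives density of periodic points. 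Next, $v^{(m)} \in \ell_p$ (or $c_0$) implies $v^{(m)}_j \to 0$, hence for every $\bar v \in \bar V$ one has $\bar v_j \le \alpha_m v^{(m)}_j \to 0$, so by Theorem~\ref{dynamics of backward shift on Koethe coechelon spaces}(c) the operator $B$ is mixing, and by Proposition~\ref{mixing implies hypercyclic} also sequentially hypercyclic. Since mixing implies transitivity and periodic points are dense, $B$ is chaotic, completing (iii).

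The main obstacle is $(i) \Rightarrow (ii)$: the delicate point is passing from the information that $v^{(M)}$ is small on the arithmetic-progression support $J$ to the unrestricted conclusion that $v^{(M)} \in \ell_p$ (or $c_0$) on all of $\N$. This step relies crucially on $V$ being decreasing (so that the finitely many shifted vectors $B^s x$ simultaneously belong to a single step space $\ell_p(v^{(M)})$) together with the elementary covering of $\N$ by the $N$ translates $J - s$.
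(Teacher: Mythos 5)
Your proposal is correct and follows essentially the same route as the paper: the implication $(i)\Rightarrow(ii)$ via the $N$-periodicity of the orbit vectors $B^s x$ placed in a common step space $\ell_p(v^{(M)})$ and the covering of a cofinite part of $\N$ by the $N$ translates of the support, and $(ii)\Rightarrow(iii)$ via mixing from Theorem~\ref{dynamics of backward shift on Koethe coechelon spaces}~c), sequential hypercyclicity from Proposition~\ref{mixing implies hypercyclic}, and density of periodic points obtained from $\ell_\infty\subseteq\ell_p(v^{(m)})$ and periodic extensions of finitely supported vectors. The only cosmetic difference is that the paper approximates just the basis vectors $e_k$ by the periodic sums $\sum_{j\ge 0}e_{k+ji}$, whereas you approximate arbitrary finitely supported vectors; both arguments are interchangeable.
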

	
	\begin{proof}
		Trivially, iii) implies iv) and iv) implies i). We show that i) implies ii). Thus, let $x\in k_p(V)\backslash\{0\}$ be periodic for $B$. We choose $m_0,N,j_0\in\N$ such that $x\in \ell_p(v^{(m_0)})$ (we consider the case $1\leq p<\infty$; the case $p=0$ is analogous), $B^Nx=x$, and $x_{j_0}\neq 0$. Then $x$ is a periodic sequence with period $N$ and $(x_{j_0+jN})_{j\in\N}$ is a constant non-null sequence. Since
		$$|x_{j_0}|^p\sum_{j=1}^\infty |v^{(m_0)}_{j_0+jN}|^p\leq\|x\|_{p,v^{(m_0)}}^p<\infty$$
		it follows that
		$$\sum_{j=1}^\infty (v^{(m_0)}_{j_0+jN})^p<\infty.$$
		We can find $m>m_0$ such that $B^nx\in\ell_p(v^{(m)})$ for every $n\in\{1,\ldots, N-1\}$ and applying the arguments from above to $B^n x$, $n=1\ldots,N-1$ we get
		$$\forall\,n\in\{1,\ldots,N-1\}:\,\sum_{j=1}^\infty (v^{(m)}_{j_0+jN-n})^p<\infty$$
		which implies $v^{(m)}\in\ell_p$.
		
		Next, if ii) holds, it follows from Theorem \ref{dynamics of backward shift on Koethe coechelon spaces} c) that $B$ is mixing, and thus sequentially hypercyclic, too, by Proposition \ref{mixing implies hypercyclic}. We define
		$$H:=\{x=(x_j)_{j\in\N}\in\omega;\,x\text{ periodic}\}.$$
		With $m$ as in ii) and $v^{(m)}\in\ell_p$ it follows that $\ell_\infty\subseteq \ell_p(v^{(m)})$, hence $H\subseteq\ell_p(v^{(m)})\subseteq k_p(V)$. Clearly, every $x\in H$ is periodic for $B$, so it is enough to show that $H$ is dense in $\ell_p(v^{(m)})$ (which is dense in $k_p(V)$ since $\text{span}\{e_j;\,j\in\N\}$ is). The latter will be accomplished once we have shown $e_k\in\overline{H}^{\ell_p(v^{(m)})}, k\in\N$. So, we fix $k\in\N$ and $\varepsilon>0$. Select $i\in\N, i>k,$ such that $\sum_{j=i+1}^\infty (v^{(m)}_j)^p<\varepsilon^p$ and set
		$$z:=\sum_{j=0}^\infty e_{k+ji}\in H.$$
		Then
		$$\|e_k-z\|_{p,v^{(m)}}^p=\sum_{j=1}^\infty (v^{(m)}_{k+ji})^p<\varepsilon^p,$$
		so that indeed $e_k\in\overline{H}^{\ell_p(v^{(m)})}$. Hence, ii) implies iii).
	\end{proof}
	
	In the remainder of this section we will generalize the results for the backward shift to weighted generalized backward shifts. In order to do so, we first introduce some terminology.
	
	\begin{definition}
		A \textit{symbol $\psi$} is a bijection $\psi:\N\rightarrow\N\backslash\{1\}$ satisfying
		$$\N=\{1\}\cup\bigcup_{n\in\N}\{\psi^{n}(1)\}.$$
		Moreover, a \textit{weight (sequence) $w$} is a sequence $w=(w_j)_{j\in\N}\in\omega$ such that $w_j\neq 0, j\in\N$.
		Then,
		$$
		B_{w,\psi}:\omega\rightarrow\omega,x=(x_j)_{j\in\N}\mapsto(w_{\psi(j)}x_{\psi(j)})_{j\in\N}
		$$
		is called the \textit{weighted generalized backward shift} (with weight sequence $w$ and symbol $\psi$). In case $w_j=1, j\in\N,$ we simply write $B_\psi$ (\textit{generalized backward shift}) and in case $\psi(j)=j+1$, we write $B_w$ instead of $B_{w,\psi}$ (\textit{weighted backward shift}). Actually, $B_{w,\psi}=C_\psi \circ D_w$, a weighted composition operator, where the composition operator with symbol $\psi$ is defined as $C_\psi((x_j)_j)=(x_{\psi(j)})_j$, and the multiplication (diagonal) operator with weight $w$ is $D_w((x_j)_j)=(w_jx_j)_j$.
	\end{definition}

	\begin{proposition}\label{conjugacy}
		Let $\psi$ be a symbol and $w$ a weight sequence. Then
		$$
		T_{w,\psi}:\omega\rightarrow\omega, x=(x_j)_{j\in\N}\mapsto ((\prod_{l=0}^{j-1} w_{\psi^{l}(1)} )x_{\psi^{j-1}(1)})_{j\in\N}
		$$
		is an isomorphism such that $T_{w,\psi}^{-1}\circ B \circ T_{w,\psi}=B_{w,\psi}$. Here, as usual $\psi^{0}(1):=1$.
	\end{proposition}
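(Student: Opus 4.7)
The plan is to recognize $T_{w,\psi}$ as the composition of a coordinate permutation and a diagonal multiplication by a nowhere vanishing sequence, and then to verify the intertwining relation coordinatewise by a short product manipulation.

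First I would introduce the shorthand $\sigma(j):=\psi^{j-1}(1)$ for $j\in\N$ (with $\psi^{0}=\mathrm{id}$) and show that $\sigma$ is a bijection of $\N$. Since $\psi$ maps $\N$ into $\N\setminus\{1\}$, one has $\psi^{n}(1)\neq 1$ for every $n\geq 1$; if $\psi^{j-1}(1)=\psi^{k-1}(1)$ for some $j<k$, then applying $\psi^{-1}$ (which is well defined on $\N\setminus\{1\}$ because $\psi:\N\to\N\setminus\{1\}$ is bijective) $j-1$ times gives $1=\psi^{k-j}(1)$, contradicting the previous observation, so $\sigma$ is injective. Surjectivity is the defining property $\N=\{1\}\cup\bigcup_{n\in\N}\{\psi^{n}(1)\}$ of a symbol. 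Writing $c_j:=\prod_{l=0}^{j-1}w_{\psi^{l}(1)}$, which is nonzero because each $w_k$ is, the definition of $T_{w,\psi}$ becomes $(T_{w,\psi}x)_j=c_j\,x_{\sigma(j)}$. Hence $T_{w,\psi}=D_{c}\circ C_{\sigma}$, where $C_{\sigma}x:=(x_{\sigma(j)})_j$ and $D_{c}x:=(c_jx_j)_j$. Both factors are continuous linear bijections of $\omega=\K^{\N}$ with the product topology, with continuous inverses $C_{\sigma^{-1}}$ and $D_{(1/c_j)_j}$; thus $T_{w,\psi}$ is a topological isomorphism of $\omega$.

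For the conjugacy it suffices to show that $B\circ T_{w,\psi}=T_{w,\psi}\circ B_{w,\psi}$, and here I would use the two identities $\psi(\sigma(j))=\sigma(j+1)$ (immediate from $\psi(\psi^{j-1}(1))=\psi^{j}(1)$) and the telescoping relation $c_{j+1}=c_j\,w_{\sigma(j+1)}$. At an arbitrary coordinate $j$ one then computes
$$(T_{w,\psi}B_{w,\psi}x)_j=c_j(B_{w,\psi}x)_{\sigma(j)}=c_j\,w_{\psi(\sigma(j))}\,x_{\psi(\sigma(j))}=c_{j+1}x_{\sigma(j+1)}=(T_{w,\psi}x)_{j+1}=(BT_{w,\psi}x)_j,$$
and composing with $T_{w,\psi}^{-1}$ on the left yields the asserted conjugacy.

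The main obstacle, such as it is, is the careful unwinding of the definition of a \emph{symbol} to see that the orbit of $1$ under $\psi$ enumerates $\N$ bijectively. Once that is in place, both the isomorphism property and the intertwining reduce to bookkeeping, with no analytic content beyond the continuity of the coordinate projections on $\omega$.
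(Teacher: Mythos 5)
Your proof is correct and follows essentially the same route as the paper: the map $\sigma(j)=\psi^{j-1}(1)$ is exactly the paper's bijection $\chi$, the identity $\psi(\sigma(j))=\sigma(j+1)$ is the paper's key relation, and the coordinatewise verification of $T_{w,\psi}\circ B_{w,\psi}=B\circ T_{w,\psi}$ is the same computation. Your explicit factorization $T_{w,\psi}=D_c\circ C_\sigma$ and the detailed injectivity argument for $\sigma$ are slightly more careful than the paper's ``this union is a partition'' and ``obviously bicontinuous,'' but add nothing essentially new.
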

	
	\begin{proof}
		Since $\psi:\N\rightarrow\N\backslash\{1\}$ is a symbol, $\N=\{1\}\cup\bigcup_{n\in\N}\psi^{n}(1)$ and this union is a partition of $\N$. Hence,
		$$
		\chi:\N\rightarrow\N,\chi(1):=1, \ \ \chi(j+1):=\psi^{j}(1), \ \ j\in \N ,
		$$
		is a bijection. Clearly,
		\begin{equation}\label{conjugacy 2}
		\forall\,j\in\N:\,\psi (\chi(j))=\chi(j+1).
		\end{equation}
		With this, one readily sees
		$$
		\forall\,x\in\omega:\,T_{w,\psi}\,x=((\prod_{l=1}^j w_{\chi(l)})x_{\chi(j)})_{j\in\N}
		$$
		which implies that $T_{w,\psi}$ is bijective. Obviously, $T_{w,\psi}$ is also bicontinuous. Finally, for $x\in\omega$ we have
		\begin{eqnarray*}
			T_{w,\psi}\big(B_{w,\psi}\, x\big)&=&T_{w,\psi}\big((w_{\psi(j)} x_{\psi(j)})_{j\in\N}\big)=\big((\prod_{l=1}^j w_{\chi(l)}) w_{\psi(\chi(j))} x_{\psi(\chi(j))}\big)_{j\in\N}\\
			&=&\big((\prod_{l=1}^{j+1} w_{\chi(l)}) x_{\chi(j+1)}\big)_{j\in\N}=B\big(T_{w,\psi}\,x\big),
		\end{eqnarray*}
		where we have used (\ref{conjugacy 2}) in the third equality. Since $T_{w,\psi}$ is bijective, the claim follows.
	\end{proof}
	
	\begin{corollary}\label{continuity of generalized backward shifts on Koethe coechelon spaces}
		Let $\psi$ be a symbol and $w$ a weight sequence. Moreover, let $V=(v^{(m)})_{m\in\N}$ be a decreasing sequence of strictly positive weights and $p\in\{0\}\cup[1,\infty)$. Then, the following are equivalent.
		\begin{enumerate}
			\item[i)] $k_p(V)$ is invariant under $B_{w,\psi}$.
			\item[ii)] $B_{w,\psi}:k_p(V)\rightarrow k_p(V)$ is well-defined and continuous.
			\item[iii)] For every $m\in\N$ there are $n\in\N$ and $C>0$ such that
			$$\forall\,j\in\N:\,|w_{\psi^{j}(1)}|v^{(n)}_{\psi^{j-1}(1)}\leq C v^{(m)}_{\psi^{j}(1)}.$$
		\end{enumerate}
	\end{corollary}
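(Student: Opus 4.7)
The plan is to reduce the statement to Proposition \ref{continuity of the backward shift on Koethe coechelon spaces} by exploiting the conjugation provided by Proposition \ref{conjugacy}. Let $\chi:\N\to\N$ be the bijection with $\chi(1)=1$ and $\chi(j+1)=\psi^j(1)$ introduced in the proof of Proposition \ref{conjugacy}, and set $c_j:=\prod_{l=1}^j w_{\chi(l)}$. I would then define the auxiliary weight sequence
$$u^{(m)}_j:=\frac{v^{(m)}_{\chi(j)}}{|c_j|},\qquad m,j\in\N,$$
which is a decreasing sequence of strictly positive weights (the bijection $\chi$ and the scalars $c_j$ do not depend on $m$), so that $k_p(U):=k_p((u^{(m)})_{m\in\N})$ is a well-defined LF-sequence space of the type treated in Section \ref{Koethe coechelon spaces}.

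The first main step is to verify that $T_{w,\psi}$ restricts to an isometric isomorphism $T_{w,\psi}:\ell_p(u^{(m)})\to\ell_p(v^{(m)})$ for every $m\in\N$ (and analogously with $c_0$ in place of $\ell_p$ when $p=0$). This is a direct substitution: for $x\in\omega$ and $y=T_{w,\psi}x$ one has $y_j=c_j x_{\chi(j)}$, so reindexing $k=\chi(j)$ in the $p$-norm yields $\|y\|_{p,v^{(m)}}=\|x\|_{p,u^{(m)}}$. Since the step spaces are matched isometrically by the same map, $T_{w,\psi}$ induces a topological isomorphism of LF-spaces $k_p(U)\to k_p(V)$.

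By Proposition \ref{conjugacy}, $B=T_{w,\psi}\,B_{w,\psi}\,T_{w,\psi}^{-1}$, so $B_{w,\psi}$ leaves $k_p(V)$ invariant (respectively, is continuous on $k_p(V)$) if and only if $B$ leaves $k_p(U)$ invariant (respectively, is continuous on $k_p(U)$). Applying Proposition \ref{continuity of the backward shift on Koethe coechelon spaces} to $k_p(U)$ therefore proves the equivalence of (i) and (ii) here, and reduces the analytic part of condition (iii) to: for every $m\in\N$ there exist $n\in\N$ and $C>0$ with $u^{(n)}_j\leq C\,u^{(m)}_{j+1}$ for all $j\in\N$.

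It remains to translate this inequality into the form stated in (iii). Substituting the definition of $u^{(m)}$, using $c_{j+1}/c_j=w_{\chi(j+1)}$, and recalling from the proof of Proposition \ref{conjugacy} that $\chi(j)=\psi^{j-1}(1)$ for $j\geq 1$ (with $\psi^0(1):=1$), the inequality rearranges to
$$|w_{\psi^j(1)}|\,v^{(n)}_{\psi^{j-1}(1)}\leq C\,v^{(m)}_{\psi^j(1)},\qquad j\in\N,$$
which is exactly condition (iii). The only delicate point in the argument is the bookkeeping between the orbit enumeration $(\psi^j(1))_j$ and the bijection $\chi$; everything else is routine algebra combined with a clean appeal to the earlier unweighted result.
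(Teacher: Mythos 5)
Your argument is correct and is essentially the paper's own proof: the auxiliary weights $u^{(m)}_j=v^{(m)}_{\chi(j)}/|c_j|$ are exactly the paper's $V_{w,\psi}$, and the reduction via the stepwise isometric isomorphism $T_{w,\psi}$, the conjugacy from Proposition~\ref{conjugacy}, and Proposition~\ref{continuity of the backward shift on Koethe coechelon spaces} applied to $k_p(U)$ is the same route, with the same final algebraic translation into condition~iii). The only slip is the stated direction of the isometry: with your definition of $u^{(m)}$ the identity $\|T_{w,\psi}x\|_{p,u^{(m)}}=\|x\|_{p,v^{(m)}}$ holds, i.e.\ $T_{w,\psi}$ maps $\ell_p(v^{(m)})$ onto $\ell_p(u^{(m)})$ rather than the reverse, but since a surjective isometry inverts to one, this does not affect the argument.
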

	
	\begin{proof}
		With $\chi:\N\rightarrow\N$ as in the proof of Proposition \ref{conjugacy} it follows for $1\leq p<\infty$
		\begin{eqnarray*}
			\forall\,x\in\omega:\,\sum_{j=1}^\infty\big(|x_j|v^{(m)}_j\big)^p&=&\sum_{j=1}^\infty\big(|\prod_{l=1}^j w_{\chi(l)} x_{\chi(j)}|\frac{v^{(m)}_{\chi(j)}}{\prod_{l=1}^j|w_{\chi(l)}|}\big)^p\\
			&=&\sum_{j=1}^\infty\big(|(T_{w,\psi}\,x)_j|\frac{v^{(m)}_{\chi(j)}}{\prod_{l=1}^j|w_{\chi(l)}|}\big)^p,
		\end{eqnarray*}
		so that $T_{w,\psi}\,x\in\ell_p\big((\frac{v^{(m)}_{\chi(j)}}{\prod_{l=1}^j|w_{\chi(l)}|})_{j\in\N}\big)$ if and only if $x\in\ell_p(v^{(m)})$. Thus, for $1\leq p<\infty$
		$$T_{w,\psi}:k_p(V)\rightarrow k_p(V_{w,\psi})$$
		is a well-defined, continuous bijection (even a stepwise isometric isomorphism), where the decreasing sequence of strictly positive weights $V_{w,\psi}=(v^{(m)}_{w,\psi})_{m\in\N}$ is given by
		$$
		\forall\,m,j\in\N:\,(v^{(m)}_{w,\psi})_j=\frac{v^{(m)}_{\chi(j)}}{\prod_{l=1}^j|w_{\chi(l)}|}=\frac{v^{(m)}_{\psi^{j-1}(1)}}{\prod_{l=1}^j |w_{\psi^{l-1}(1)}|}.
		$$
		Now, the claim follows for $1\leq p<\infty$ directly from Proposition \ref{conjugacy} and Proposition \ref{continuity of the backward shift on Koethe coechelon spaces}. The case $p=0$ is treated analogously.
	\end{proof}
	
	\begin{corollary}\label{dynamics for weighted generalized backward shifts}
		Let $\psi$ be a symbol, $w$ a weight sequence and $V=(v^{(m)})_{m\in\N}$ be a decreasing sequence of strictly positive weights such that $B_{w,\psi}$ is a well-defined, continuous linear operator on $k_p(V)$, $p\in\{0\}\cup[1,\infty)$. Then the following hold.
		\begin{enumerate}
			\item[a)] The following are equivalent.
			\begin{enumerate}
				\item[i)] $B_{w,\psi}$ is transitive on $k_p(V)$.
				\item[ii)] There is $m\in\N$ such that $$\liminf_{j\rightarrow\infty}\frac{v^{(m)}_{\psi^{j}(1)}}{\prod_{l=0}^{j}|w_{\psi^{l}(1)}|}=0.$$
			\end{enumerate}
			\item[b)] The following are equivalent.
			\begin{enumerate}
				\item[i)] $B_{w,\psi}$ is (sequentially) hypercyclic on $k_p(V)$.
				\item[ii)] There are $m\in\N$ and a thick set $I\subseteq\N$ such that
				$$\lim_{I\ni j\rightarrow\infty}\frac{v^{(m)}_{\psi^{j}(1)}}{\prod_{l=0}^{j}|w_{\psi^{l}(1)}|}=0.$$
			\end{enumerate}
			\item[c)] The following are equivalent.
			\begin{enumerate}
				\item[i)] $B$ is mixing on $k_p(V)$.
				\item[ii)] For every infinite set $I\subseteq\N$ there is $m\in\N$ such that
				$$\liminf_{I\ni j\rightarrow \infty}\frac{v^{(m)}_{\psi^{j}(1)}}{\prod_{l=0}^{j}|w_{\psi^{l}(1)}|}=0.$$
			\end{enumerate}
			\item[d)] Assume there is $m\in\N$ such that for every $\varepsilon>0$ the set
			$$I_\varepsilon:=\left\{j\in\N;\,\frac{v^{(m)}_{\psi^{j}(1)}}{\prod_{l=0}^{j}|w_{\psi^{l}(1)}|}<\varepsilon\right\}$$
			is syndetic. Then $B_{w,\psi}$ is topologically ergodic on $k_p(V)$.
			\item[e)] The following are equivalent.
			\begin{enumerate}
				\item[i)] $B_{w,\psi}$ has a periodic point $x\in k_p(V), x\neq 0$.
				\item[ii)] There is $m\in\N$ such that
				$$\left(\frac{v^{(m)}_{\psi^{j}(1)}}{\prod_{l=0}^{j}|w_{\psi^{l}(1)}|}\right)_{j\in\N}\in\ell_p,$$
				respectively,
				$$\left(\frac{v^{(m)}_{\psi^{j}(1)}}{\prod_{l=0}^{j}|w_{\psi^{l}(1)}|}\right)_{j\in\N}\in c_0$$
				when $p=0$.
				\item[iii)] $B_{w,\psi}$ is chaotic on $k_p(V)$.
			\end{enumerate}
		\end{enumerate}
	\end{corollary}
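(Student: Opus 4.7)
The plan is to reduce everything to the results already established for the plain backward shift by invoking the conjugacy of Proposition~\ref{conjugacy}. As shown within the proof of Corollary~\ref{continuity of generalized backward shifts on Koethe coechelon spaces}, the operator $T_{w,\psi}$ from Proposition~\ref{conjugacy} restricts to a (stepwise isometric) topological isomorphism $T_{w,\psi}:k_p(V)\to k_p(V_{w,\psi})$, where $V_{w,\psi}=(v^{(m)}_{w,\psi})_{m\in\N}$ is the decreasing sequence of strictly positive weights defined by
$$(v^{(m)}_{w,\psi})_j=\frac{v^{(m)}_{\psi^{j-1}(1)}}{\prod_{l=1}^j|w_{\psi^{l-1}(1)}|},\qquad m,j\in\N.$$
By Proposition~\ref{conjugacy}, this isomorphism intertwines $B_{w,\psi}$ acting on $k_p(V)$ with the plain backward shift $B$ acting on $k_p(V_{w,\psi})$. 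The hypothesis that $B_{w,\psi}$ is well-defined and continuous on $k_p(V)$ thus translates, via Corollary~\ref{continuity of generalized backward shifts on Koethe coechelon spaces} and Proposition~\ref{continuity of the backward shift on Koethe coechelon spaces}, into continuity of $B$ on $k_p(V_{w,\psi})$, so the results of the previous subsection apply to the latter.

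Next, since each of the dynamical properties appearing in (a)--(e)---transitivity, (sequential) hypercyclicity, mixing, topological ergodicity, existence of a non-zero periodic point, and chaos---is invariant under topological conjugacy (sequential hypercyclicity transfers because $T_{w,\psi}$ and $T_{w,\psi}^{-1}$ are continuous between LF-spaces and hence sequentially continuous), each part of the corollary is equivalent to the corresponding statement for $B$ on $k_p(V_{w,\psi})$. Parts (a), (b), (c), and (d) then follow from the respective parts of Theorem~\ref{dynamics of backward shift on Koethe coechelon spaces} applied to $V_{w,\psi}$, and part (e) follows from Proposition~\ref{chaos for Koethe coechelon spaces} (specifically, from the equivalence of its items i), ii), and iv)) applied to $V_{w,\psi}$.

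Finally, one verifies that the conditions stated in (a)--(e), written in terms of the expression $\frac{v^{(m)}_{\psi^j(1)}}{\prod_{l=0}^j|w_{\psi^l(1)}|}$, agree with the conditions obtained from Theorem~\ref{dynamics of backward shift on Koethe coechelon spaces} and Proposition~\ref{chaos for Koethe coechelon spaces} applied to $V_{w,\psi}$, up only to the harmless index shift $j\mapsto j+1$:
$$(v^{(m)}_{w,\psi})_{j+1}=\frac{v^{(m)}_{\psi^{j}(1)}}{\prod_{l=0}^{j}|w_{\psi^{l}(1)}|}.$$
Each asymptotic property involved---$\liminf$ being zero, convergence along a thick subset of $\N$, syndeticity of sublevel sets, and $\ell_p$- respectively $c_0$-membership---is insensitive to dropping or inserting a single initial term, so no obstacle arises from this relabeling. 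The whole argument is thus essentially a conjugacy-and-relabel; the only mildly delicate points are the preservation of sequential hypercyclicity under the isomorphism and the verification that the index shift does not affect any of the asymptotic conditions, both of which are immediate.
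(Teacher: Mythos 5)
Your proposal is correct and follows essentially the same route as the paper: the paper likewise observes that $B_{w,\psi}$ on $k_p(V)$ and $B$ on $k_p(V_{w,\psi})$ are conjugate via $T_{w,\psi}$, that all the dynamical properties in question are preserved under conjugacy (with periodic points corresponding under $T_{w,\psi}$), and then invokes Theorem~\ref{dynamics of backward shift on Koethe coechelon spaces} and Proposition~\ref{chaos for Koethe coechelon spaces}. Your explicit verification of the index shift $(v^{(m)}_{w,\psi})_{j+1}=v^{(m)}_{\psi^{j}(1)}/\prod_{l=0}^{j}|w_{\psi^{l}(1)}|$ is a detail the paper leaves implicit, but it matches and changes nothing.
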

	
	\begin{proof}
		Recall that two continuous self-maps $T:X\rightarrow X$ and $S:Y\rightarrow Y$ on topological spaces $X,Y$ are conjugate if there is a homeomorphism $\phi:X\rightarrow Y$ such that $S\circ \phi=\phi\circ T$. As seen in the proof of Corollary \ref{continuity of generalized backward shifts on Koethe coechelon spaces}, $B_{w,\psi}$ on $k_p(V)$ and $B$ in $k_p(V_{w,\psi})$ are conjugate via $T_{w,\psi}$. Since all considered dynamical properties are preserved under conjugacy and since $x$ is periodic for $B_{w,\psi}$ if and only if $T_{w,\psi}\,x$ is periodic for $B$, the claim follows from Theorem \ref{dynamics of backward shift on Koethe coechelon spaces} and Proposition \ref{chaos for Koethe coechelon spaces}.
	\end{proof}
	
	\section{Examples and open problems}\label{examples}

	In this section we present some examples. We begin by considering topological dual spaces of power series spaces of infinite type.
	
	\subsection{Weighted generalized backward shifts on duals of power series spaces of infinite type}
	
	Let $(\alpha_j)_{j\in\N}$ be an increasing sequence of positive real numbers with $\lim_{j\rightarrow\infty}\alpha_j=\infty$. As in \cite{MeVo1997}, we set with $a^{(m)}_j=e^{m\alpha_j}$
	$$\Lambda_\infty(\alpha):=\lambda_2((a^{(m)})_{m\in\N}):=\{x\in\omega;\,\forall\,m\in\N:\,\|x\|_m^2:=\sum_{j=1}^\infty |x_j|^2e^{2m\alpha_j}<\infty\}.$$
	Then, the topological dual $\Lambda_\infty'(\alpha)$ of $\Lambda_\infty(\alpha)$ is topologically isomorphic to
	\begin{eqnarray*}
		k_2\big((\frac{1}{a^{(m)}})_{m\in\N}\big)&=&k_2\big(\big((e^{-m\alpha_j})_{j\in\N}\big)_{m\in\N}\big)\\
		&=&\{x\in\omega;\,\exists\,m\in\N:\,\sum_{j=1}^\infty |x_j|^2e^{-2m\alpha_j}<\infty\}.
	\end{eqnarray*}
	The particular case of $\alpha_j=j$ gives $\Lambda_\infty((j)_{j\in\N})\cong \mathscr{H}(\C)$, the latter denoting the space of entire functions, and $\Lambda_\infty'((j)_{j\in\N})\cong \mathscr{H}(\{0\})$, the space of germs of holomorphic functions in $0$. Weighted backward shifts on $k_2((j)_{j\in\N})$, i.e.\ weighted generalized backward shifts with $\psi(j)=j+1$, played an important role in the investigation of weighted backward shifts on spaces of real analytic functions in \cite{DoKa18}.
	
	For a symbol $\psi:\N\rightarrow\N\backslash\{1\}$ and a weight sequence $w$ we have by Corollary~\ref{continuity of generalized backward shifts on Koethe coechelon spaces} that $B_{w,\psi}$ is well-defined (and continuous) on $\Lambda_\infty'(\alpha)$ if and only if
	\begin{eqnarray*}
		\forall\,m\in\N \ \ \exists\,n\in\N:\  \infty &>&\sup_{j\in\N}\frac{|w_{\psi^{j}(1)}|e^{-n\alpha_{\psi^{j-1}(1)}}}{e^{-m\alpha_{\psi^{j}(1)}}}\\
		&=& \exp\big(\sup_{j\in\N}(\log|w_{\psi^{j}(1)}|+m\alpha_{\psi^{j}(1)}-n\alpha_{\psi^{j-1}(1)})\big)
	\end{eqnarray*}
	so that $B_{w,\psi}$ is well-defined (and continuous) on $\Lambda_\infty'(\alpha)$ precisely when
	\begin{equation}\label{continuity on dual}
	\forall\,m\in\N \ \ \exists\,n\in\N: \ \ \sup_{j\in\N} \,\big(\log|w_{\psi^{j}(1)}|+m\alpha_{\psi^{j}(1)}-n\alpha_{\psi^{j-1}(1)}\big)<\infty.
	\end{equation}
	In case of $\psi(j)=j+1$ we obtain the weighted backward shift which we simply denote by $B_w$. Since then $\psi^{l}(1)=l+1, l\in\N_0$, by (\ref{continuity on dual}), $B_w$ is well-defined and continuous on $\Lambda_\infty'(\alpha)$ if and only if
	\begin{equation}\label{continuity weighted backward shift on dual}
	\forall\,m\in\N \ \ \exists\,n\in\N: \ \ \sup_{j\in\N}\big(\log|w_{j+1}|+m\alpha_{j+1}-n\alpha_j\big)<\infty.
	\end{equation}
	
	\begin{corollary}
		Let $\psi$ be a symbol and let $\alpha=(\alpha_j)$ be as above such that the generalized backward shift $B_\psi$ is well-defined on $\Lambda_\infty'(\alpha)$. Then $B_\psi$ is mixing on $\Lambda_\infty'(\alpha)$. Moreover, $B_\psi$ is chaotic on $\Lambda_\infty'(\alpha)$ if, and only if, $\sum_{j=1}^\infty e^{-l\alpha_j}<\infty$ for some $l>0$.
	\end{corollary}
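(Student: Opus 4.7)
The plan is to reduce both assertions to Corollary \ref{dynamics for weighted generalized backward shifts}, applied with the specific weights $v^{(m)}_j = e^{-m\alpha_j}$ and the trivial weight sequence $w_j \equiv 1$, so that each product $\prod_{l=0}^{j}|w_{\psi^l(1)}|$ collapses to $1$. The structural fact I would invoke throughout, which is already recorded at the start of the proof of Proposition \ref{conjugacy}, is that being a symbol means $\N = \{1\} \cup \bigcup_{n\in\N}\{\psi^n(1)\}$ is a partition, so $j \mapsto \psi^j(1)$ is a bijection from $\N$ onto $\N\setminus\{1\}$.

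For the mixing part, I would check condition c) ii) of Corollary \ref{dynamics for weighted generalized backward shifts}. Given any infinite $I \subseteq \N$, injectivity of $j \mapsto \psi^j(1)$ forces $\{\psi^j(1) : j \in I\}$ to be an infinite subset of $\N$, and since $\alpha_k \to \infty$ as $k \to \infty$ by hypothesis, this yields $\alpha_{\psi^j(1)} \to \infty$ along $I$. Consequently $\lim_{I \ni j \to \infty} e^{-m\alpha_{\psi^j(1)}} = 0$ for every $m \in \N$, which certainly gives $\liminf = 0$, and the mixing claim follows. Note that a single value (any $m\geq 1$) works uniformly over all infinite $I$.

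For chaoticity, I would use the equivalence i) $\Leftrightarrow$ ii) in e) of Corollary \ref{dynamics for weighted generalized backward shifts}: $B_\psi$ is chaotic iff there is $m \in \N$ with $(e^{-m\alpha_{\psi^j(1)}})_{j\in\N} \in \ell_2$. Using the bijection above to re-index,
$$\sum_{j=1}^\infty e^{-2m\alpha_{\psi^j(1)}} = \sum_{k=2}^\infty e^{-2m\alpha_k},$$
which is finite precisely when $\sum_{k=1}^\infty e^{-2m\alpha_k} < \infty$ (the two sums differ only by the finite term $k=1$). Therefore chaoticity is equivalent to $\sum_{j=1}^\infty e^{-l\alpha_j} < \infty$ for some $l > 0$: if an integer $m$ witnesses ii), take $l = 2m$; conversely, given $l > 0$ with this convergence, any integer $m \geq l/2$ satisfies the $\ell_2$-condition because $e^{-2m\alpha_j} \leq e^{-l\alpha_j}$.

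I do not foresee any serious obstacle: once the earlier corollary is on the table, the proof is essentially a translation step through the bijection $j \mapsto \psi^j(1)$. The only care point is remembering that this bijection omits the index $1$, which alters the relevant sums by a single finite term and is therefore irrelevant for both the $\liminf$ computation and the convergence of the series.
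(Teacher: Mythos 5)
Your proposal is correct and follows essentially the same route as the paper: both reduce to Corollary \ref{dynamics for weighted generalized backward shifts} with $v^{(m)}_j=e^{-m\alpha_j}$ and $w\equiv 1$, using that a symbol makes $j\mapsto\psi^j(1)$ a bijection onto $\N\setminus\{1\}$ so that $\psi^j(1)\to\infty$ and hence $v^{(m)}_{\psi^j(1)}\to 0$ for every $m$. The paper leaves the re-indexing for the chaos criterion implicit, and you have merely written out that (harmless, single-term) bookkeeping explicitly.
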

	
	\begin{proof}
		Since $\psi$ is a symbol, we have the disjoint union $\N=\{1\}\cup\bigcup_{j\in\N}\psi^{j}(1)$. In particular,
		$$
		\lim_{j\rightarrow\infty}\psi^{j}(1)=\infty,
		$$
		so that with $v^{(m)}_j=e^{-m \alpha_j}, m,j\in\N$, due to the fact that $(\alpha_j)_{j\in\N}$ increases to infinity, we have
		$$
		\forall\,m\in\N: \ \ 0=\lim_{j\rightarrow\infty}e^{-m\alpha_{\psi^{j}(1)}}=\lim_{j\rightarrow\infty} v^{(m)}_{\psi^{j}(1)}.$$
		Because $\Lambda_\infty'(\alpha)=k_2((v^{(m)})_{m\in\N})$, the claim follows from Corollary \ref{dynamics for weighted generalized backward shifts}.
	\end{proof}
	
	\subsection{The annihilation operator on $\mathscr{S}'(\R)$.} The special case of $\alpha=(\log j)_{j\in\N}$ in the previous subsection gives $\Lambda_\infty'(\alpha)=s'$, the space of slowly increasing sequences, i.e.\ the strong dual space of
	\[s:=\lambda_2\left(((j^m)_{j\in\N})_{m\in\N}\right)=\{x\in\omega;\,\forall\,m\in\N:\,\|x\|_m^2:=\sum_{j=1}^\infty |x_j|^2 j^{2m}<\infty\}.\]
	It follows from (\ref{continuity weighted backward shift on dual}) that the weighted backward shift $B_w$ with weight sequence $w$ is a well-defined and continuous operator on $s'$ if and only if
	$$\forall\,m\in\N\,\exists\,n\in\N:\,\sup_{j\in\N}\frac{|w_{j+1}|(j+1)^m}{j^n}<\infty.$$
	Given a weight sequence $w$ satisfying the above condition, it follows that the weighted backward shift $B_w$ is transitive, hypercyclic, etc.\ if (and only if) there is $m\in\N$ such that the sequence
	$$\left(\frac{1}{j^m\prod_{l=1}^j |w_l|}\right)_{j\in\N}$$
	satisfies the respective properties mentioned in part a), b) etc.\ of Corollary \ref{dynamics for weighted generalized backward shifts}.
	
	Instead of repeating these conditions explicitly, we just consider the special weighted backward shift $B_w$ with weight sequence $w_j=\sqrt{j}$. By the above, $B_{(\sqrt{j})_{j\in\N}}$ is clearly well-defined and continuous on $s'$. As is well-known, see e.g.\ \cite[Example 29.5(2)]{MeVo1997}, via Hermite expansion, $B_{(\sqrt{j})_{j\in\N}}$ on $s'$ is conjugate to the annihilation operator on $\mathscr{S}'(\R)$ (when the latter is equipped with the strong dual topology), i.e.\ to the operator
	$$A_{-}:\mathscr{S}'(\R)\rightarrow\mathscr{S}'(\R), u\mapsto \frac{1}{\sqrt{2}}(u'+xu),$$
	where we denote the multiplication operator with the identity on $\mathscr{S}'(\R)$ simply by $u\mapsto xu, u\in\mathscr{S}'(\R)$. Dynamical properties of the annihilation operator on the Fr\'echet space $s$ of rapidly decreasing sequences were studied in \cite{GuMC1996} and taken up on a different Fr\'echet space in \cite{DeEmIn2000}.
	
	\begin{corollary}
		The annihilation operator
		$$A_{-}:\mathscr{S}'(\R)\rightarrow\mathscr{S}'(\R), u\mapsto \frac{1}{\sqrt{2}}(u'+xu)$$
		on $\mathscr{S}'(\R)$ equipped with the strong dual topology is mixing, sequentially hypercyclic, topologically ergodic, and chaotic.
	\end{corollary}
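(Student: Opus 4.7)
The plan is to reduce everything, via the conjugacy noted in the text, to the single weighted backward shift $B_w$ with $w_j=\sqrt{j}$ acting on $s'\cong k_2(V)$, where $V=(v^{(m)})_{m\in\N}$ is given by $v^{(m)}_j=j^{-m}$, and then to verify the hypotheses of Corollary \ref{dynamics for weighted generalized backward shifts} with $\psi(j)=j+1$. Since each of the four dynamical properties at issue is preserved under topological conjugacy, and since the Hermite expansion supplies the isomorphism $\mathscr{S}'(\R)\cong s'$ that intertwines $A_{-}$ with $B_{(\sqrt{j})_{j\in\N}}$ (as cited in \cite[Example 29.5(2)]{MeVo1997}), this reduction is enough.

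The central computation is the decay rate of the sequence that controls every part of Corollary \ref{dynamics for weighted generalized backward shifts}. With $\psi(j)=j+1$, so $\psi^{j}(1)=j+1$, and $w_l=\sqrt{l}$, one has
\[
\frac{v^{(m)}_{\psi^{j}(1)}}{\prod_{l=0}^{j}|w_{\psi^{l}(1)}|}=\frac{(j+1)^{-m}}{\prod_{l=1}^{j+1}\sqrt{l}}=\frac{1}{(j+1)^{m}\sqrt{(j+1)!}},\qquad j\in\N.
\]
Because of the factorial in the denominator, this sequence decays super-exponentially to $0$; in particular, already for $m=0$ it belongs to $\ell_1\subseteq\ell_2$.

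Given this decay, each of the four assertions is then immediate from the corresponding part of Corollary \ref{dynamics for weighted generalized backward shifts}, applied with any fixed $m\in\N$. For topological mixing, the $\liminf$ in part c)ii) is zero along every infinite $I\subseteq\N$ since the entire sequence converges to $0$. For sequential hypercyclicity, condition b)ii) is satisfied by the thick set $I=\N$. For topological ergodicity, the set $I_\varepsilon$ in part d) is cofinite, hence syndetic, for every $\varepsilon>0$. Finally, chaos follows from part e)ii) via membership in $\ell_2$, which is immediate from the decay above.

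The argument is essentially computational rather than structurally delicate; I do not expect any genuine obstacle. The only point demanding care is the bookkeeping in the Hermite-expansion identification of $A_{-}$ on $\mathscr{S}'(\R)$ with the weighted backward shift $B_{(\sqrt{j})_{j\in\N}}$ on $s'$, including the fact that the strong dual topology on $\mathscr{S}'(\R)$ corresponds to the standard inductive-limit topology on $s'=k_2(V)$. Once that identification is in place, the rapid factorial decay overwhelms all polynomial weights and every required condition is verified at once.
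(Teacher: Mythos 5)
Your proof is correct and follows essentially the same route as the paper: reduce by Hermite-expansion conjugacy to $B_{(\sqrt{j})_{j\in\N}}$ on $s'=k_2\big(((j^{-m})_{j\in\N})_{m\in\N}\big)$, compute the controlling sequence $\big(\tfrac{1}{(j+1)^m\sqrt{(j+1)!}}\big)_{j\in\N}$, and read off all four properties from Corollary \ref{dynamics for weighted generalized backward shifts} via its factorial decay (the paper states this as $(1/(j^m\sqrt{j!}))_{j}\in\ell_2$ for all $m\in\N$). The only cosmetic slip is invoking ``$m=0$,'' which is not an admissible index, but the membership in $\ell_2$ for every $m\in\N$ is equally immediate.
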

	
	\begin{proof}
		It follows immediately from
		$$\forall\,m\in\N:\,\big(\frac{1}{j^m\sqrt{j!}}\big)_{j\in\N}\in\ell_2$$
		and Corollary \ref{dynamics for weighted generalized backward shifts} that $B_{(\sqrt{j})_{j\in\N}}$ is mixing, (sequentially) hypercyclic, chaotic, and topologically ergodic on $k_2\big(\big((\frac{1}{j^m})_{j\in\N}\big)_{m\in\N}\big)=s'$. Hence, the claim follows by conjugacy.
	\end{proof}
	
	\subsection{Separating examples.}
	
	In this subsection we provide examples of K\"othe coechelon spaces such that the backward shift $B$ is well-defined and continuous on these spaces as well as topologically ergodic but not hypercyclic, mixing but does not satisfy the sufficient condition for topological ergodicity from Theorem \ref{dynamics of backward shift on Koethe coechelon spaces} d), respectively. Moreover, we give an example of a nuclear K\"othe coechelon space on which the backward shift is transitive but not hypercyclic.
	
	\begin{proposition}\label{topological ergodic but not hypercyclic}
		There is a decreasing sequence $V=(v^{(m)})_{m\in\N}$ of strictly positive weights such that $B$ on $k_p(V)$ is topologically ergodic but not hypercyclic.
	\end{proposition}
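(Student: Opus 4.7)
The plan is to build $V=(v^{(m)})_{m\in\N}$ by bootstrapping from a carefully designed ground weight. Fix a constant $C>1$ and define
\[
v^{(1)}_n:=\frac{1}{k^*(n)},\qquad k^*(n):=\max\{k\geq 1:\, k!\mid n\},\qquad v^{(m+1)}_j:=\min\bigl(v^{(m)}_j,\,C\,v^{(m)}_{j+1}\bigr).
\]
Then $V$ is a decreasing sequence of strictly positive weights (the minimum of positives is positive), and $B$ is continuous on $k_p(V)$ by Proposition~\ref{continuity of the backward shift on Koethe coechelon spaces}, with $n(m)=m+1$ and constant $C$.

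For topological ergodicity I would invoke the sufficient condition of Theorem~\ref{dynamics of backward shift on Koethe coechelon spaces}(d) at $m_0=1$: one checks that $\{j:v^{(1)}_j<\delta\}=(\lfloor 1/\delta\rfloor+1)!\,\N$, a nontrivial arithmetic progression and therefore syndetic for every $\delta>0$. For non-hypercyclicity, iterating the recursion gives
\[
v^{(1+k)}_j=\min_{0\leq l\leq k}C^l\,v^{(1)}_{j+l},\qquad\{v^{(1+k)}<\varepsilon\}=\bigcup_{l=0}^{k}\bigl(P_l\,\N-l\bigr),\qquad P_l=(\lfloor C^l/\varepsilon\rfloor+1)!.
\]
By Theorem~\ref{dynamics of backward shift on Koethe coechelon spaces}(b), it then suffices to find, for each $k$, an $\varepsilon_{1+k}>0$ for which the ``bad'' union above contains no arbitrarily long runs of consecutive integers, i.e.\ has syndetic complement.

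The main obstacle is exactly this combinatorial bound on bad-run lengths in $\bigcup_{l=0}^{k}(P_l\N-l)$. The crucial structural feature is the divisibility chain $P_0\mid P_1\mid\cdots\mid P_k$ inherent to the factorial growth: a single ``fresh start'' $j$ that is bad via $l$ (meaning $P_l\mid j+l$ but $P_{l'}\nmid j+l'$ for $l'<l$) propagates via $P_{l-1}\mid P_l$ to force $j+1,\ldots,j+l$ to be bad as well, giving a single-start contribution of length at most $k+1$. Concatenations of distinct fresh starts at nearby indices would impose joint residue constraints modulo the various $P_l$, and the super-fast growth of the factorials forces these residues into a constrained pattern that a CRT/counting argument bounds uniformly by some $L_k<\infty$, yielding $(L_k+1)$-syndeticity of the complement. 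Making this bound rigorous, and checking that a fixed $C$ (or, if needed, a level-dependent sequence $C_m\to\infty$) yields simultaneous validity of continuity, monotonicity, and bounded bad-runs at every level, is the technical core of the proof.
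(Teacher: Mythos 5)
Your construction is essentially the paper's. The paper takes $v^{(1)}_j=2^{-n}$ for $j=2^{n-1}(2k-1)$, so that the sublevel sets of $v^{(1)}$ are the progressions $2^N\N$, and defines $v^{(m+1)}_j:=\min\{v^{(m)}_j,v^{(m)}_{j+1}\}$; it then observes, exactly as you do, that Theorem~\ref{dynamics of backward shift on Koethe coechelon spaces}~d) holds at level $m=1$ while the sublevel sets of the higher $v^{(m)}$ are finite unions of shifted arithmetic progressions and hence not thick, so that b) fails. Your factorial ground weight plays the same role as the dyadic valuation, and the constant $C>1$ is superfluous: with $C=1$ you still get $v^{(m+1)}_j\le v^{(m)}_{j+1}$, which is all that Proposition~\ref{continuity of the backward shift on Koethe coechelon spaces} requires.

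The step you flag as ``the technical core'' and leave open is, however, immediate, and needs neither the divisibility chain $P_0\mid\cdots\mid P_k$ nor any CRT or fresh-start analysis. The set $\bigcup_{l=0}^{k}(P_l\N-l)$ is a union of $k+1$ arithmetic progressions whose common differences satisfy $P_l\ge P_0$; consecutive terms of each progression differ by $P_l$, so any block of $P_0$ consecutive integers (diameter $P_0-1<P_l$) contains at most one term of each, hence at most $k+1$ points of the union. Choosing $\varepsilon_{1+k}$ so small that $P_0=(\lfloor 1/\varepsilon_{1+k}\rfloor+1)!>k+1$, no block of $P_0$ consecutive integers lies in the union, so the union is not thick. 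To finish you should also record the (one-line) reduction you implicitly use: if some thick $I$ satisfied $\lim_{I\ni j\to\infty}v^{(1+k)}_j=0$, then $\{j:\,v^{(1+k)}_j<\varepsilon_{1+k}\}$ would contain $I$ minus a finite set and hence be thick, contradicting the above; together with the case $m=1$, where the sublevel set $P\N$ is itself not thick, Theorem~\ref{dynamics of backward shift on Koethe coechelon spaces}~b) then gives non-hypercyclicity. With these two short insertions (and no level-dependent constants $C_m$) your proof is complete.
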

	
	\begin{proof}
		We set
		$$v^{(1)}_j=2^{-n}\text{ if }j=2^{n-1}(2k-1)\text{ for some }k,n\in\N.$$
		With this, we define recursively
		$$\forall\,m,j\in\N:\,v^{(m+1)}_j:=\min\{v^{(m)}_j,v^{(m)}_{j+1}\}$$
		so that $V=(v^{(m)})_{m\in\N}$ is a decreasing sequence of strictly positive weights such that $B$ is well-defined and continuous on $k_2(V)$ by Proposition \ref{continuity of the backward shift on Koethe coechelon spaces}.
		
		Clearly, $(v^{(m)})_{m\in\N}$ satisfies the condition in Theorem \ref{dynamics of backward shift on Koethe coechelon spaces} d) (with $m=1$) but the condition under b) is not fulfilled so that $B$ is topologically ergodic on $k_p(V)$ but not hypercyclic.
	\end{proof}
	
	\begin{proposition}\label{nuclear exmaple}
		There is a decreasing sequence $V=(v^{(m)})_{m\in\N}$ of strictly positive weights such that $k_2(V)$ is nuclear and the backward shift $B$ is transitive on $k_2(V)$ but not hypercyclic.
	\end{proposition}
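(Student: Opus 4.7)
The plan is to construct an explicit decreasing sequence $V=(v^{(m)})_{m\in\N}$ of strictly positive weights on $\N$ satisfying all three conditions simultaneously: $k_2(V)$ is nuclear, the backward shift is well-defined and continuous, it is transitive (i.e.\ $\liminf_j v^{(m)}_j=0$ for some $m$ by Theorem~\ref{dynamics of backward shift on Koethe coechelon spaces}~a)) but not hypercyclic (i.e.\ for every $m$ and every thick $I\subseteq\N$, $v^{(m)}$ does not tend to zero along $I$, equivalently $\{j:v^{(m)}_j\geq\varepsilon\}$ is syndetic for some $\varepsilon>0$, by Theorem~\ref{dynamics of backward shift on Koethe coechelon spaces}~b)).

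The first step is to fix a family of pairwise disjoint syndetic sets that partition $\N$, namely $S_\ell=\{2^{\ell-1}(2k-1):k\geq 1\}$, which is $2^\ell$-syndetic, and let $\ell(j)$ denote the unique $\ell$ with $j\in S_\ell$. The construction of the weights will then have the shape
\[
v^{(m)}_j \;=\; \varepsilon_{\ell(j)}\cdot 2^{(\ell(j)-m)\,h(j)},
\]
for a decreasing sequence $\varepsilon_\ell\to 0$ (say $\varepsilon_\ell=2^{-\ell^2}$) and a function $h(j)$ to be tuned. Decreasing-in-$m$ is immediate. Transitivity follows because, on $S_\ell$ with $\ell<m$, $v^{(m)}_j=\varepsilon_\ell\cdot 2^{-(m-\ell)h(j)}\to 0$ as $j\to\infty$. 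Non-hypercyclicity is obtained by noting that on $S_m$ the weight equals $\varepsilon_m$, so $\{j:v^{(m)}_j\geq\varepsilon_m\}\supseteq S_m$ is syndetic. Nuclearity is the clean payoff of this form: the quotient satisfies
\[
\frac{v^{(n)}_j}{v^{(m)}_j}\;=\;2^{-(n-m)h(j)},
\]
and for $h(j)$ growing to infinity (as slow as one needs) this is summable for all sufficiently large $n-m$, giving $\sum_j v^{(n)}_j/v^{(m)}_j<\infty$.

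The main obstacle is the continuity of $B$ on $k_2(V)$, which by Proposition~\ref{continuity of the backward shift on Koethe coechelon spaces} requires, for every $m$, some $n\geq m$ and $C>0$ with $v^{(n)}_j\leq C\,v^{(m)}_{j+1}$. Because $\ell(j)$ jumps abruptly (e.g.\ $\ell(2^k)=k+1$ while $\ell(2^k+1)=1$), the naive choice $h(j)=j$ produces doubly-exponential blow-ups in this ratio. The tension here is structural: non-hypercyclicity forces $v^{(m)}$ to be bounded below on the syndetic set $S_m$, so $v^{(m)}$ cannot be forced globally monotone in $j$; yet continuity restricts how sharply it can dip. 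The remedy is to choose $h(j)$ growing slowly enough (e.g.\ proportional to $\log j$ or to $\log\log j$), and to tune $\varepsilon_\ell$ accordingly so that the worst-case ratio $v^{(n)}_j/v^{(m)}_{j+1}$ — which is controlled by $\ell(j)-\ell(j+1)$, and by $h(j)-h(j+1)$ — remains uniformly bounded. With $h$ sublinear the gain from summability of $2^{-(n-m)h(j)}$ is weaker, but can still be made $\ell^1$ by taking $n-m$ large enough, for instance requiring $h(j)\geq 2\log_2 j$ eventually, so that $\sum_j 2^{-h(j)}<\infty$.

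With the three verifications in place — transitivity, non-hypercyclicity (via syndeticity of $\{v^{(m)}\geq\varepsilon_m\}$), nuclearity (via summability of $2^{-(n-m)h(j)}$) — and with continuity of $B$ secured by the slow growth of $h$, the space $k_2(V)$ provides the desired example. I expect the hardest part of the full write-up to be the careful bookkeeping of the continuity inequality at the transition points $j=2^k\mapsto j+1=2^k+1$, where $\ell$ drops from $k+1$ to $1$; there one must trade off the smallness of $v^{(m)}_{2^k+1}$ (whose factor is $2^{(1-m)h(2^k+1)}$) against the largeness of $v^{(n)}_{2^k}$, and this is precisely what forces the rate of $h$ and the decay of $\varepsilon_\ell$ to be coupled.
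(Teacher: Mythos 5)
Your overall strategy (exhibit explicit weights, then invoke Theorem~\ref{dynamics of backward shift on Koethe coechelon spaces}~a) and~b) for transitivity and non-hypercyclicity, Proposition~\ref{continuity of the backward shift on Koethe coechelon spaces} for continuity, and summability of consecutive quotients for nuclearity) is the same as the paper's, and your reduction of non-hypercyclicity to syndeticity of $\{j:\,v^{(m)}_j\geq\varepsilon\}$ is correct. However, the concrete family you propose cannot work, and the obstruction is not where you locate it.

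The continuity requirement is: for every $m$ there exist $n\geq m$ and $C>0$ with $v^{(n)}_j\leq C\,v^{(m)}_{j+1}$ for \emph{all} $j$. Take $m=1$ and any candidate $n$. For every even $j$ the successor $j+1$ is odd, so $\ell(j+1)=1$ and $v^{(1)}_{j+1}=\varepsilon_1$ is a fixed constant. On the other hand, for $j$ ranging over the infinite set $S_{n+1}$ (all of whose elements are even) you have
\[
v^{(n)}_j=\varepsilon_{n+1}\,2^{(\,(n+1)-n)\,h(j)}=\varepsilon_{n+1}\,2^{h(j)}\longrightarrow\infty ,
\]
since $h$ is unbounded. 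Hence $\sup_j v^{(n)}_j/v^{(1)}_{j+1}=\infty$ for every $n$, and $B$ is not even well defined on $k_2(V)$. This failure is structural: it occurs along an entire syndetic class $S_\ell$ with $\ell>n$, not merely at the transition points $j=2^k$ that you single out, and it cannot be repaired by your two proposed knobs. Shrinking $\varepsilon_{n+1}$ does nothing because it is a single constant while $2^{h(j)}$ is unbounded over $j\in S_{n+1}$; and slowing $h$ down is incompatible with nuclearity, which forces $h(j)\gtrsim\frac{1}{n-m}\log_2 j$ for the quotients $2^{-(n-m)h(j)}$ to be summable, hence forces $h$ to be unbounded. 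The root of the problem is that your exponent $(\ell(j)-m)h(j)$ makes $v^{(m)}_j$ \emph{tend to infinity} along every class $S_\ell$ with $\ell>m$, whereas non-hypercyclicity only needs $v^{(m)}$ to stay bounded below on one syndetic set; the unboundedness above is what kills continuity.

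The paper's construction avoids exactly this. Writing $j=2^{n(j)}-r(j)$ with $0\leq r(j)<2^{n(j)-1}$, it sets $v^{(m)}_j=1/(2^{n(j)}j^{2m})$ on the sparse set $\{r(j)<m\}$ (a run of $m$ indices just below each power of $2$) and $v^{(m)}_j=2^j/j^{2m}$ elsewhere. Two features matter: first, the ``small'' region at level $m+1$ is the ``small'' region at level $m$ shifted by one index (plus one extra point per block), so the quotient $v^{(m+1)}_j/v^{(m)}_{j+1}$ compares small with small and large with large and stays bounded by $4^{m+1}$; second, the quotient between consecutive levels at the \emph{same} index is always $\leq 1/j^2$, giving nuclearity for free, while the large values $2^j/j^{2m}$ on a set meeting every long interval rule out hypercyclicity. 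If you want to keep a construction in the spirit of yours, you would need to redesign the exponent so that $v^{(m)}_j$ is large but \emph{controlled} off a sparse set whose translate by $1$ is compatible with the next level --- at which point you have essentially rediscovered the paper's example.
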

	
	\begin{proof}
		For every $j\in\N$ there are unique $n(j)\in\N$ and $r(j)\in\{0\,\ldots,2^{n(j)-1}-1\}$ such that $j=2^{n(j)}-r(j)$. Then,
		$$\forall\,j\in\N:\,2^{n(j+1)}-r(j+1)=j+1=2^{n(j)}-r(j)+1,$$
		so that either
		$$n(j)=n(j+1)\text{ and }r(j+1)=r(j)-1$$
		or
		$$n(j+1)=n(j)+1\text{ and }r(j)=0, r(j+1)=2^{n(j)}-1.$$
		
		For $m,j\in\N$ we define
		$$v^{(m)}_j:=\begin{cases}
		\frac{1}{2^{n(j)}j^{2m}},&\text{ if } r(j)<m,\\ \frac{2^j}{j^{2m}},& \text{else.}
		\end{cases}$$
		Then, for fixed $j\in\N$ the sequence $(v^{(m)}_j)_{m\in\N}$ is decreasing.
		
		We first show that for each $m\in\N$ there is $C>0$ such that $v^{(m+1)}_j\leq C v^{(m)}_{j+1}$ for every $j\in\N$ so that the backward shift $B$ is well-defined (and continuous) on $k_2(V)$ by Proposition \ref{continuity of the backward shift on Koethe coechelon spaces}.
		
		So, we fix $m\in\N$. For $j\in\N$ we consider first the case that $r(j)<m+1$. If additionally $n(j)=n(j+1)$ we have $r(j+1)=r(j)-1<m$ so that
		$$\frac{v^{(m+1)}_j}{v^{(m)}_{j+1}}=\frac{1/(2^{n(j)} j^{2(m+1)})}{1/(2^{n(j+1)}(j+1)^{2m})}=\frac{1}{j^2}(1+\frac{1}{j})^{2m}\leq 4^m.$$
		On the other hand, if $n(j+1)=n(j)+1$ we have $r(j)=0$ and $r(j+1)=2^{n(j)}-1$ so that
		$$\frac{v^{(m+1)}_j}{v^{(m)}_{j+1}}=\begin{cases}r(j+1)\geq m:&\dfrac{1/(2^{n(j)}j^{2(m+1)})}{2^{j+1}/(j+1)^{2m}}=\frac{(1+\frac{1}{j})^{2m}}{j^2 2^{n(j)+j+1}}\leq 4^m\\
		&\\
		r(j+1)<m:&\dfrac{1/(2^{n(j)}j^{2(m+1)})}{1/(2^{n(j+1)}(j+1)^{2m})}=\frac{2(1+\frac{1}{j})^{2m}}{j^2}\leq 4^{m+1}.\end{cases}$$
		Thus in case $r(j)<m+1$ we have $v^{(m+1)}_j\leq 4^{m+1} v^{(m)}_{j+1}$.
		
		In case of $r(j)\geq m+1$ we have in particular $r(j)>0$ so that $n(j)=n(j+1)$ as well as $r(j+1)=r(j)-1\geq m$ hold. Then
		$$\frac{v^{(m+1)}_j}{v^{(m)}_{j+1}}=\frac{2^j/j^{2(m+1)}}{2^{j+1}/(j+1)^{2m}}=\frac{1}{2 j^2}(1+\frac{1}{j})^{2m}\leq 4^m.$$
		Hence, we have shown $v^{(m+1)}_j\leq 4^{m+1} v^{(m)}_{j+1}$ for all $j\in\N$. It should be noted that for fixed $m\in\N$ and $j=2^n-m, n\in\N,$ we have $j+1=2^n-(m-1)$, i.e.\ $n(j)=n(j+1)=n$, $r(j)=m$, $r(j+1)=m-1$ so that
		$$\frac{v^{(m)}_j}{v^{(m)}_{j+1}}=\frac{2^j/j^{2m}}{1/(2^{n(j+1)}(j+1)^{2m})}=2^{j+n(j+1)}(1+\frac{1}{j})^{2m}\geq 2^{2^n-m+n}$$
		so that $\sup_{j\in\N}v^{(m)}_j/v^{(m)}_{j+1}=\infty$ and thus $\ell_2(v^{(m)})$ is not $B$-invariant.
		
		Obviously, for every $m\in\N$ it holds
		$$\liminf_{j\rightarrow\infty}v^{(m)}_j=0$$
		so that $B$ is transitive on $k_2(V)$ by Theorem \ref{dynamics of backward shift on Koethe coechelon spaces} a). Moreover, because
		$$\forall\,m\in\N:\,\lim_{j\rightarrow\infty}\frac{2^j}{j^{2m}}=\infty$$
		it follows
		$$\forall\,m\in\N, \varepsilon>0\,\exists L\in\N\,\forall\,l\geq L, r\geq m:\,\sup_{1\leq j\leq r}v^{(m)}_{l+j}>\varepsilon$$
		so that by Theorem \ref{dynamics of backward shift on Koethe coechelon spaces} b) $B$ is not hypercyclic on $k_2(V)$.
		
		Finally, due to
		$$\frac{v^{(m+1)}_j}{v^{(m)}_j}=\begin{cases}
		r(j)<m:&\frac{1/(2^{n(j)} j^{2(m+1)})}{1/(2^{n(j)} j^{2m})}=\frac{1}{j^2}\\
		&\\
		r(j)=m:&\frac{1/(2^{n(j)} j^{2(m+1)})}{2^j/j^{2m}}=\frac{1}{j^2}\frac{1}{2^{n(j)+j}}\\
		&\\
		r(j)\geq m+1:&\frac{2^j/j^{2(m+1)}}{2^j/j^{2m}}=\frac{1}{j^2}
		\end{cases}$$
		it follows that $(v^{(m+1)}_j/v^{(m)}_j)_{j\in\N}\in\ell_1$ so that $k_2(V)$ is nuclear (cf.\ \cite[Proposition 2.15]{Bi1988}).
	\end{proof}
	
	\begin{proposition}\label{mixing but not topologically ergodic condition}
		There is a decreasing sequence $V=(v^{(m)})_{m\in\N}$ of strictly positive weights such that $B$ is mixing on $k_p(V)$ but does not satisfy the sufficient condition for topological ergodicity from Theorem \ref{dynamics of backward shift on Koethe coechelon spaces} d).
	\end{proposition}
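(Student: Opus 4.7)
The plan is to exhibit explicit weights $V=(v^{(m)})_m$ built on a nested ``ring'' structure so that every level $m$ enjoys plateaus of unbounded length (forcing the failure of~d) in Theorem~\ref{dynamics of backward shift on Koethe coechelon spaces}), while the joint behavior across levels still satisfies the mixing characterization c)(ii) of that theorem. Concretely, I would partition $\N$ into consecutive blocks $I_k=[a_k,a_k+k-1]$ (with $a_1=1$ and $a_{k+1}=a_k+k$), and within each block nest left-aligned sub-intervals $I_k^{(l)}=[a_k,a_k+\lfloor k/2^{l-1}\rfloor-1]$ for $l=1,\dots,L_k$, where $L_k:=\lfloor\log_2 k\rfloor+1$ is the largest level with $I_k^{(l)}\neq\emptyset$. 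For $j\in I_k$ put $k(j):=k$ and let $l(j)$ be the largest $l$ with $j\in I_k^{(l)}$. I would then set
\[v^{(m)}_j=\begin{cases}1/l(j),&m\leq l(j),\\[2pt] 1/\bigl(k(j)\,l(j)\,2^{m-l(j)-1}\bigr),&m>l(j).\end{cases}\]
This is a strictly positive sequence of weights, decreasing in $m$ (the only non-trivial drop, at $m=l(j)+1$, has ratio $1/k(j)\leq 1$). Continuity of $B$ on $k_p(V)$ is then obtained from Proposition~\ref{continuity of the backward shift on Koethe coechelon spaces}~iii) with $n=m+1$: inside a block the ratio $v^{(m+1)}_j/v^{(m)}_{j+1}$ collapses to $l/(l+1)\leq 1$ in each of the subcases $m\leq l$, $m=l+1$, $m>l+1$ at a within-block transition with $l(j)=l+1$, $l(j+1)=l$; and at a block boundary $j=a_{k+1}-1$, $j+1=a_{k+1}$ the ratio is $O(L_{k+1}/(k\cdot 2^{m-1}))=O(\log k/k)$, yielding a uniform constant $C_m$.

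To see that d) fails, fix $m\in\N$ and choose $\varepsilon=1/m$: in every block $I_k$ with $L_k\geq m$ the set $\{j:v^{(m)}_j\geq 1/m\}$ contains the $m$-th ring $I_k^{(m)}\setminus I_k^{(m+1)}$, a single interval of length $\lfloor k/2^{m-1}\rfloor-\lfloor k/2^m\rfloor\to\infty$, so the complement $\{j:v^{(m)}_j<1/m\}$ has arbitrarily long gaps and cannot be syndetic. For mixing I would verify Theorem~\ref{dynamics of backward shift on Koethe coechelon spaces}~c)(ii) for an arbitrary infinite $I\subseteq\N$ by case split: if $l(j)$ is unbounded on $I$, then for any fixed $m$ we have $v^{(m)}_j=1/l(j)\to 0$ along the subsequence $\{j\in I:l(j)\geq m\}$; if $l(j)\leq L$ on $I$, then since each block $I_k$ contains only finitely many points with $l\leq L$, the infinitude of $I$ forces $k(j)$ to be unbounded on $I$, and the choice $m=L+1$ then gives $v^{(m)}_j\leq 1/k(j)$, which tends to $0$ along a subsequence.

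The main obstacle is the tension between the two conditions: mixing forbids any infinite set on which every $v^{(m)}$ stays bounded below, whereas the failure of~d) demands arbitrarily long plateaus at every level. The factor $1/l(j)$ keeps plateau heights shrinking with depth so that deep-lying points kill every fixed $v^{(m)}$, and the extra factor $1/k(j)$ in the regime $m>l(j)$ handles infinite sets trapped at bounded depth. The most delicate bookkeeping is verifying Proposition~\ref{continuity of the backward shift on Koethe coechelon spaces}~iii) uniformly at block boundaries, but this is controlled by the slow growth $L_{k+1}=\Theta(\log k)$.
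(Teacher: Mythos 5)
Your construction is correct and follows essentially the same strategy as the paper's proof: a two\-/parameter indexing of $\N$ in which, for each fixed level $m$, the positions where $v^{(m)}$ stays above a fixed threshold form arbitrarily long intervals (defeating condition d)), while mixing is obtained by the same dichotomy on whether the ``depth'' parameter is bounded or unbounded along the given infinite set. The only difference is one of execution: the paper works with an abstract bijection $\varphi:\N^2\rightarrow\N$ and enforces monotonicity and continuity of $B$ a posteriori by taking minima, whereas you build the blocks and rings explicitly so that monotonicity in $m$ and condition iii) of Proposition~\ref{continuity of the backward shift on Koethe coechelon spaces} (with $n=m+1$) hold by direct computation.
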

	
	\begin{proof}
		The following construction is inspired by the example \cite[Example 27.21]{MeVo1997} of a K\"othe echelon space which is a Montel space but not a Schwartz space. We fix a bijection $\varphi:\N^2\rightarrow\N$ such that, for every $k\in\N$ there are arbitrarily long bounded intervals $I$ with
		$$I\cap\N=\varphi\big(\{(l,k);\,l\in F\}\big)$$
		for some finite set $F\subseteq\N$. We then define $\hat{v}^{(1)}_j:=1, j\in\N$ and for $m,j\in\N, m\geq 2$
		$$\hat{v}^{(m)}_j:=\begin{cases}
		(m l)^{-m},&\text{ if }j=\varphi(l,k)\text{ with }k<m\\ m^{-k},&\text{ if }j=\varphi(l,k)\text{ with }k\geq m.
		\end{cases}$$
		By the choice of $\varphi$ it follows that for every $m\in\N$ and every $\varepsilon\in(0,1/m^m)$ there are arbitrarily long bounded intervals $I$ such that $\hat{v}^{(m)}_j>\varepsilon$ for every $j\in I\cap\N$.
		
		To have a decreasing sequence $V=(v^{(m)})_{m\in\N}$ of strictly positive weights such that $B$ is continuous on $k_p(V)$, we set $v^{(1)}_j:=\hat{v}^{(1)}_j, j\in\N$ as well as
		$$\forall\,m,j\in\N:\,v^{(m+1)}_j:=\min\{\hat{v}^{(m+1)}_j, v^{(m)}_j, v^{(m)}_{j+1}\}.$$
		By induction on $m$ it follows that for every $m\in\N$ and every $\varepsilon\in(0, 1/m^m)$ there are arbitrarily long bounded intervals $I$ such that $v^{(m)}_j>\varepsilon$ for every $j\in I\cap\N$. Hence, $V=(v^{(m)})_{m\in\N}$ does not satisfy the sufficient condition for topological ergodicity of $B$ from Theorem \ref{dynamics of backward shift on Koethe coechelon spaces} d).
		
		On the other hand, let $I\subseteq\N$ be infinite. In case there is a finite $F\subseteq\N$ with $I\subseteq\varphi(\N\times F)$ we select $m\in\N$ such that $m>k$ for every $k\in F$ so that
		$$v^{(m)}_j\leq\hat{v}^{(m)}_j=(m l)^{-m}\text{ if }j=\varphi(l,k),$$
		i.e.\ $\inf_{j\in I}v^{(m)}_j=0$. In case that there is no finite $F\subseteq\N$ with $I\subseteq\varphi(\N\times F)$ there are sequences of natural numbers $(l_n)_{n\in\N}, (k_n)_{n\in\N}$, where $(k_n)_{n\in\N}$ is strictly increasing, such that $\varphi(l_n,k_n)\in I$ for every $n\in\N$. Since $v^{(2)}_{\varphi(l_n,k_n)}\leq 2^{-k_n}$ we obtain $\inf_{j\in I}v^{(2)}_j=0$. Hence, by Theorem \ref{dynamics of backward shift on Koethe coechelon spaces} c) $B$ is mixing on $k_p(V)$.
	\end{proof}
	
	\subsection{Snake shift operators}
	
	(This subsection is only contained in the arXiv-Version of the article and not in the published version!) Our last example concerns a construction of (sequentially) hypercyclic operators on direct sums of Fr\'echet spaces given in \cite{BoFrPeWe2005}. In order to fit these kind of operators in our frame, we need to generalize our K\"othe spaces to certain sequence LF-spaces (see, e.g., \cite{BiBo94} and \cite{Vogt1992}).
	
	Unlike to the previous sections, in this subsection our sequences will be indexed by $\N\times\N$ instead of $\N$. We define the inductive limit $E=\ind_n \lambda_p(V^n)$ of K\"othe echelon spaces, where $p\in [1,+\infty]$, the sequence of weights $V^n=(v^{(n,k)})_{k\in\N}$ is so that $v^{(n,k)}_{i,j}\in ]0,+\infty]$ for all $n,k,i,j\in\N$,
	\[
	v^{(n,k)}\leq v^{(n,k+1)} \ \ \mbox{ and } \ \ v^{(n,k)}\geq v^{(n+1,k)} \ \ \forall n,k\in\N ,
	\]
	\[
	\lambda_p(V^n)=\{ x=(x_{i,j})_{i,j}\in\K^{\N\times\N} \ ; \ (x_{i,j}v^{(n,k)}_{i,j})_{i,j}\in\ell_p(\N\times\N) \ \forall k\in\N\},
	\]
	endowed with the increasing sequence of norms $\| x\|_k:=\| (x_{i,j}v^{(n,k)}_{i,j})_{i,j}\|_{\ell_p}$, $k\in\N$. Observe that, since we allow $v^{(n,k)}_{i,j}=\infty$, this means that, if $x=(x_{i,j})_{i,j}\in\lambda_p(V^n)$, then $x_{i,j}=0$. These spaces contain direct sums of classical sequence spaces like $\oplus_n\ell_p$ and $\oplus_n s$, where $s$ is the space of rapidly decreasing sequences. Actually,
	\[
	E=\oplus_n\ell_p=\ind_n\lambda_p(V^n), \ \mbox{ where } \ v_{i,j}^{(n,k)}=1 \ \mbox{ if } \ i\leq n, \ \forall j,k \in\N,
	\]
	\[
	\mbox{ and } \ v_{i,j}^{(n,k)}=\infty \ \mbox{ if } \ i>n, \ \ \forall j,k \in\N,
	\]
	and
	\[
	E=\oplus_n s=\ind_n\lambda_1(\tilde{V}^n), \ \mbox{ where } \ \tilde{v}_{i,j}^{(n,k)}=j^k \ \mbox{ if } \ i\leq n, \ \forall j,k \in\N,
	\]
	\[
	\mbox{ and } \ \tilde{v}_{i,j}^{(n,k)}=\infty \ \mbox{ if } \ i>n, \ \ \forall j,k \in\N.
	\]
	Given a symbol $\psi:\N\times\N\to\N\times\N\setminus\{(1,1)\}$ (which is defined analogously as in the case of $\N$ as index set) and a weight sequence $w=(w_{i,j})_{i,j}$, we consider the weighted generalized backward shift $B_{w,\psi}:\K^{\N\times\N}\to\K^{\N\times\N}$. We are interested in the particular case that the weight sequence is constant, $w_{i,j}=\lambda$ with $|\lambda|>1$ (i.e., $B_{w,\psi}=\lambda B_{\psi}$), and $\psi:\N\times\N\to\N\times\N\setminus\{(1,1)\}$ is defined by
	\[
	\begin{array}{ll}
	(1) \ \ \psi(1,j)=(1,j+1),          &   n(k)<j<n(k+1), \\
	(2) \ \ \psi(2,2k-1)=(1,n(k)+1),    &                  \\
	(3) \ \ \psi(1,n(k+1))=(2,2k),      &                  \\
	(4) \ \ \psi(2k-1,1)=(2k,1),        &                  \\
	(5) \ \ \psi(i,j)=(i+1,j-1),        &   \mbox{ if } i+j \mbox{ even, } i,j>1, \\
	(6) \ \ \psi(i,j)=(i-1,j+1),        &   \mbox{ if } i+j \mbox{ odd, } i>2,
	\end{array}
	\]
	for every $k\in\N$, where $(n(k))_k$ is a suitable sequence such that $(n(k+1)-n(k))_k$ increases and tends to infinity.
	
	This kind of construction was called ``snake shift'' in \cite{BoFrPeWe2005}, and was applied to the spaces $\oplus_n\ell_p$ and $\oplus_n s$. Proceeding as in Corollary~\ref{continuity of generalized backward shifts on Koethe coechelon spaces}, $B_\psi:E\to E$ is (well-defined and) continuous if, and only if,
	\[
	\forall m\in\N, \ \exists n\in\N, \ \forall k\in\N, \ \exists l\in\N \ \mbox{ and } \ C>0 \ \mbox{ with } \ v^{(n,k)}_{r_{j-1}}\leq Cv^{(m,l)}_{r_j}, \ \forall j\in\N,
	\]
	where $r_i=\psi^i(1,1)$, $i\in\N_0$. The selection of $\psi$ ensures the continuity of $B_\psi$ on $\oplus_n\ell_p$ if, e.g., for each $m\in\N$ we take $n=m+1$ in the characterization above. The case $E=\oplus_n s$ is more subtle since condition $(3)$ in the selection of $\psi$ forces that continuity of $B_\psi$ needs that $(n(k))_k$ is polynomially bounded. Actually, it was shown in \cite{BoFrPeWe2005} that we can select $(n(k))_k$ so that $n(k)\leq 3k^2$, $k\in\N$. This means that, for the continuity condition above, for each $m\in \N$, we take $n=m+1$, and for any $k\in \N$, we may choose $l=k^3$.
	
	Concerning the dynamics of snake shifts as defined, and following the argument of Corollary~\ref{dynamics for weighted generalized backward shifts}, it is easy to see that
	$\lambda B_{\psi}$ is (sequentially) hypercyclic on the inductive limit $E=\ind_n \lambda_p(V^n)$ of K\"othe echelon spaces as defined above if, and only if, there are $m\in\N$ and a thick set $I\subseteq\N$ such that, for any $k\in \N$,
	\begin{equation}\label{seq_hc_kothe_lf}			
	\lim_{I\ni j\rightarrow\infty}\frac{v^{(m,k)}_{\psi^{j}(1,1)}}{|\lambda|^j}=0.
	\end{equation}
	To prove the sequential hypercyclicity of $\lambda B_{\psi}$ in our spaces, we fix $m=1$, select the increasing sequences $(j_k)_k$ and $(l_k)_k$, $j_k<l_k$, with
	\begin{equation*}
	\psi^{j_k}(1,1)=(1,n(k)+1) \ \ \mbox{ and } \ \ \psi^{l_k}(1,1)=(1,n(k+1)), \ \ k\in\N.
	\end{equation*}
	By condition (1) in the selection of $\psi$, $\psi^{j}(1,1)=(1,n(k)+j-j_k+1)$ for $j_k\leq j\leq l_k$, $l_k-j_k=n(k+1)-n(k)-1$ and, since $(n(k+1)-n(k))_k$ tends to infinity, we have that $I:=\bigcup_k[j_k,l_k]\cap\N$ is a thick set. When $E=\oplus_n\ell_p$, $v^{(1,k)}_{1,j}=1$ for all $j,k\in\N$, and condition~\eqref{seq_hc_kothe_lf} is trivially satisfied. In the case $E=\oplus_n s$, we observe that $\psi^j(1,1)=(1,p_j)$, where $p_j<j$, for every $j\in I$. Therefore,
	\[			
	\lim_{I\ni j\rightarrow\infty}\frac{\tilde{v}^{(1,k)}_{\psi^{j}(1,1)}}{|\lambda|^j}=\lim_{I\ni j\rightarrow\infty}\frac{\tilde{v}^{(1,k)}_{1,p_j}}{|\lambda|^j}=\lim_{I\ni j\rightarrow\infty}\frac{p_j^k}{|\lambda|^j}\leq \lim_{I\ni j\rightarrow\infty}\frac{j^k}{|\lambda|^j}=0,
	\]
	and we conclude the result.

	\begin{remark}
		Most of our main results can be generalized to bilateral shifts on sequence LF-spaces over $\Z$, and to certain weighted composition operators on more general function  LF-spaces. These results will be presented in a forthcoming paper.
	\end{remark}
	
	\subsection{Open problems}
	
	In this final subsection we mention the following natural questions which arise from our results.
	
	(1) In Proposition \ref{ergodic on LF} we gave a sufficient condition for topological ergodicity. However, this condition is not necessary by Proposition \ref{mixing but not topologically ergodic condition}. Which condition completely characterizes topological ergodicity of $B$?
	
	(2) Is there a nuclear K\"othe coechelon space $k_p(V)$ on which the backward shift $B$ is topologically ergodic but not sequentially hypercyclic? Such an example would be a strengthening of both Propositions \ref{topological ergodic but not hypercyclic} and \ref{nuclear exmaple}.

	\section*{Acknowledgements}
	
	We would like to thank the anonymous referee for reading the article with great
	care and detail, and for making valuable suggestions which in particular led to
	Proposition~\ref{general transitivity} that extends our Corollary~\ref{transitivity on LF}. Moreover, we are also thankful for pointing out references \cite{GuMC1996} and \cite{DeEmIn2000}.
	
	The research on the subject of this paper was initiated during a stay of the second named author at Universitat Polit\`ecnica de Val\`encia. He expresses his deep gratitude towards his coauthors for the invitation as well as for the warm hospitality during this stay as well as during many earlier visits.
	
	The first author was  partially supported by Projects MTM2016-76647-P and GV Prometeo 2017/102. The research of the third author was partially supported by Projects  MTM2016-75963-P and GV Prometeo 2017/102.

\end{document}